\newtheorem{thm0}{Theorem}[section]
\newtheorem{lemma}[thm0]{Lemma}
\newtheorem{prop}[thm0]{Proposition}
\newtheorem{rem}[thm0]{Remark}
\newtheorem{cor}[thm0]{Corollary}
\newtheorem{defin}[thm0]{Definition}
\newtheorem{exa}[thm0]{Example}
\begin{document}

\title{Logarithmic bundles of hypersurface arrangements in $ \mathbf{P}^n $}
\author{Elena Angelini}
\address{Dipartimento di Matematica e Informatica ``Ulisse Dini", Universit\`a di Firenze, Viale Morgagni 67/A, 50134 Firenze}
\email{elena.angelini@math.unifi.it}
\maketitle


\begin{abstract}
Let $ \mathcal{D}=\{D_{1}, \ldots, D_{\ell}\} $ be an arrangement of smooth hypersurfaces with normal crossings on the complex projective space $ \mathbf{P}^n $ and let $ \Omega^{1}_{\mathbf{P}^n}(\log \mathcal{D}) $ be the logarithmic bundle attached to it. Following \cite{AppuntiAncona}, we show that $ \Omega^{1}_{\mathbf{P}^n}(\log \mathcal{D}) $ admits a resolution of lenght $ 1 $ which explicitly depends on the degrees and on the equations of $ D_{1}, \ldots, D_{\ell} $. Then we prove a Torelli type theorem when all the $ D_{i} $'s have the same degree $ d $ and $ \ell \geq {{n+d} \choose {d}}+3 $: indeed, we recover the components of $ \mathcal{D} $ as unstable smooth hypersurfaces of $ \Omega^{1}_{\mathbf{P}^n}(\log \mathcal{D}) $. Finally we analyze the cases of one quadric and a pair of quadrics, which yield examples of non-Torelli arrangements. In particular, through a duality argument, we prove that two pairs of quadrics have isomorphic logarithmic bundles if and only if they have the same tangent hyperplanes.  \\

\noindent {\bf Key words} Hypersurface arrangement, Hyperplane arrangement, Logarithmic bundle, Torelli theorem

\noindent {\bf MSC 2010} 14J60, 14F05, 14C34, 14C20, 14N05 

\end{abstract}

\tableofcontents

\section{Introduction}

Let $ X $ be a non singular algebraic variety of dimension $ n $ and let $ \mathcal{D} $ be a union of $ \ell $ distinct smooth irreducible hypersurfaces on $ X $, which we call an \emph{arrangement} on $ X $.  We can associate to $ \mathcal{D} $ the \emph{sheaf of differential $ 1 $-forms with logarithmic poles on $ \mathcal{D} $}, denoted by $ \Omega^{1}_{X}(\log \mathcal{D}) $. This sheaf was originally introduced by Deligne in~\cite{De} for an arrangement with normal crossings. In this case, for all $ x \in X $, the space of sections of $ \Omega^{1}_{X}(\log \mathcal{D}) $ near $ x $ is defined by 
$$ <d\log z_{1}, \ldots, d\log z_{k}, dz_{k+1}, \ldots, dz_{n}>_{\mathcal{O}_{X,x}} $$
where $ z_{1}, \ldots, z_{n} $ are local coordinates such that $ \mathcal{D} = \{z_{1} \cdot \ldots \cdot z_{k} = 0\} $. In particular, $ \Omega^{1}_{X}(\log \mathcal{D}) $ turns out to be a vector bundle over $ X $ and it is called \emph{logarithmic bundle}.\\
Once we construct the correspondence
\begin{equation}\label{eq:Tmap}
\mathcal{D} \longrightarrow \Omega^{1}_{X}(\log \mathcal{D})
\end{equation}
a natural, interesting question is whether $ \Omega^{1}_{X}(\log \mathcal{D}) $ contains information enough to recover $ \mathcal{D} $. Since the injectivity of the map in (\ref{eq:Tmap}) is investigated, we can talk about the \emph{Torelli problem} for $ \Omega^{1}_{X}(\log \mathcal{D}) $. In particular, if the isomorphism class of $ \Omega^{1}_{X}(\log \mathcal{D}) $ determines $ \mathcal{D} $, then $ \mathcal{D} $ is called a \emph{Torelli arrangement}. \newline
In the mathematical literature, the first situation that has been analyzed is the case of hyperplanes in the complex projective space $ \mathbf{P}^{n} $. Hyperplane arrangements represent an important topic in geometry, topology and combinatorics (\cite{OT},~\cite{Arn}). In 1993 Dolgachev and Kapranov gave an answer to the Torelli problem when $ \mathcal{H} = \{H_{1}, \ldots, H_{\ell}\} $ is an arrangement of hyperplanes with normal crossings,~\cite{Do-Ka}. They proved that if $ \ell \leq n+2 $ then two different arrangements give always the same logarithmic bundle; moreover, if $ \ell \geq 2n+3 $, then we can reconstruct $ \mathcal{H} $ from $ \Omega^{1}_{\mathbf{P}^n}(\log \mathcal{H}) $ unless the hyperplanes in $ \mathcal{H} $ don't osculate a rational normal curve $ \mathcal{C}_{n} $ of degree $ n $ in $ \mathbf{P}^n $, in which case $ \Omega^{1}_{\mathbf{P}^n}(\log \mathcal{H}) $ is isomorphic to $ E_{\ell - 2}(\mathcal{C}_{n}^{\vee}) $, the \emph{Schwarzenberger bundle} (\cite{Sch2}, \cite{Sch1}) \emph{of degree $ \ell-2 $ associated to $ \mathcal{C}_{n}^{\vee} $}. In 2000 Vall\`es extended the latter result to $ \ell \geq n+3 $,~\cite{V}: while Dolgachev and Kapranov studied the set of \emph{jumping lines} (\cite{Ba},~\cite{Hu}) of $ \Omega^{1}_{\mathbf{P}^n}(\log \mathcal{H}) $, Vall\`es characterized $ \mathcal{H} $ as the set of \emph{unstable hyperplanes} of the logarithmic bundle, i.e. $ \{ H \subset \mathbf{P}^{n} \, hyperplane \, | \, H^{0}(H, \Omega^{1}_{\mathbf{P}^n}(\log \mathcal{H})_{|_{H}}^{\vee}) \not = \{0\} \}. $ \\
Concerning the higher degree case, Ueda and Yoshinaga proved that an arrangement consisting of a smooth plane cubic is Torelli if and only if the cubic has non-vanishing j-invariant (\cite{UY1}). In (\cite{UY2}) they extended the previous result to the case of a smooth hypersurface in $ \mathbf{P}^n $, showing that such arrangement is Torelli if and only if its defining equation is not of \emph{Sebastiani-Thom type} (Theorem \ref{T:UY2}). \\
In this paper, after recalling the fundamental definitions and the main classical results on the subject, we consider arrangements of higher degree smooth hypersurfaces with normal crossings on $ \mathbf{P}^n $. Following \cite{AppuntiAncona}, in Theorem~\ref{T:Ancona} we prove that $ \Omega^{1}_{\mathbf{P}^n}(\log \mathcal{D}) $ admits a resolution of lenght $ 1 $ which is a very important tool for our investigations. In particular, this resolution allows us to find again the aforementioned results concerning hyperplanes. Our main results are collected in sections $ 5, \,6$ and $ 7 $. Section $ 5 $ is devoted to arrangements made of a sufficiently large number of hypersurfaces of the same degree $ d $ in $ \mathbf{P}^n $: if $ \ell \geq {{n+d} \choose {d}}+3 $, then we can recover the components of $ \mathcal{D} $ as \emph{unstable hypersurfaces} of $ \Omega^{1}_{\mathbf{P}^n}(\log \mathcal{D}) $, unless the hyperplanes in $ \mathbf{P}^{{{n+d} \choose {d}}-1} $ corresponding to $ D_{1}, \ldots, D_{\ell} $ through the \emph{Veronese map of degree} $ d $ satisfy further hypothesis (Theorem~\ref{t54}). The notion of unstable hypersurface is inspired on the one of unstable hyperplane recalled previously. In section $ 6 $ and $ 7 $ we study the cases of one quadric and two quadrics (Proposition~\ref{p:1quadrica} and Theorem~\ref{T:pairquad}) and we find arrangements which are not of Torelli type. In particular, in the second case, by using the \emph{simultaneous diagonalization} and a \emph{duality} argument, we prove that two pairs of quadrics are associated to isomorphic logarithmic bundles if and only if they have the same tangent hyperplanes. \\

\noindent {\bf Acknowledgements} This paper collects some results of my Ph.D. thesis, mainly prepared at the University of Florence, in collaboration with Universit\'e de Pau et des pays de l'Adour (Pau). I would like to thank my advisors Professor Giorgio Ottaviani and Professor Daniele Faenzi for suggesting me the subject, for many fruitful discussions and their help. A special thank goes to Professor Vincenzo Ancona, since the Theorem \ref{T:Ancona} is based on the notes of a talk that he gave in 1998 and that he allowed me to examine.

\section{Preliminar definitions and notations}

We suppose that everything is defined over $ \mathbf{C} $. Let $ X $ be a smooth algebraic variety, we give the following:

\begin{defin}
An \emph{arrangement} on $ X $ is a family $ \mathcal{D} = \{D_{1}, \ldots, D_{\ell}\} $ of smooth irreducible hypersurfaces of $ X $ such that $ D_{i} \not= D_{j} $ for all $ i,j \in \{1, \ldots, \ell\} $, $ i \not= j $. We say that such $ \mathcal{D} $ has \emph{normal crossings} if it is locally isomorphic (in the sense of holomorphic local coordinates changes) to a union of coordinate hyperplanes of $ \mathbf{C}^n $.
\end{defin}

\begin{exa}
Let $ \mathcal{D} $ be an arrangement on the $ n $-dimensional complex projective space, which we simply denote by $ \mathbf{P}^n $. Each hypersurface $ D_{i} \in \mathcal{D} $ is defined as the zero locus of a homogeneous polynomial $ f_{i} $ of degree $ d_{i} $ in the variables $ x_{0}, \ldots, x_{n} $. Thus $ \mathcal{D} $ is given by the set of zeroes of $ f_{1} \cdot \ldots \cdot f_{\ell} $, which is a polynomial of degree $ d_{1} + \ldots + d_{\ell} $. \\
A \emph{hyperplane arrangement} $ \mathcal{H} = \{H_{1}, \ldots, H_{\ell}\} $ is a hypersurface arrangement with $ d_{i} = 1 $ for all $ i $. We have that $ \mathcal{H} $  has normal crossings if and only if $ codim(H_{i_{1}} \cap \ldots \cap H_{i_{k}}) = k $ for any $ k \leq n+1 $ and $ 1 \leq i_{1} < \ldots < i_{k} \leq \ell $. \\
If all $ d_{i} $'s are equal to $ 2 $ we have an \emph{arrangement of quadrics}. Theorem \ref{T:coppiequadriche} yields a description of the normal crossings condition in the case of a pair of quadrics. 
\end{exa}

Let $ \mathcal{D} $ be an arrangement with normal crossings on $ X $. In order to introduce the notion of \emph{sheaf of logarithmic forms} on $ \mathcal{D} $ we will refer to Deligne (\cite{De0},~\cite{De}). This is not the unique way to describe these sheaves, there are also other definitions for more general divisors (\cite{Sa},~\cite{Schenck}) that are equivalent to this one for arrangements with normal crossings. 

Let $ U = X - \mathcal{D} $ be the complement of $ \mathcal{D} $ in $ X $ and let $ j: U \hookrightarrow X $ be the embedding of $ U $ in $ X $. We denote by $ \Omega_{U}^1$ the sheaf of holomorphic differential $ 1 $-forms on $ U $ and by $ j_{\ast}\Omega_{U}^1 $ its direct image sheaf on $ X $. We remark that, since $ \mathcal{D} $ has normal crossings, then for all $ x \in X $ there exists a neighbourhood $ I_{x} \subset X $ such that $ I_{x} \cap \mathcal{D} = \{z_{1} \cdots z_{k} = 0 \} $, where $ \{z_{1}, \ldots, z_{k}\} $ is a part of a system of local coordinates. We have the following:

\begin{defin}
We call \emph{sheaf of differential $ 1 $-forms on $ X $ with logarithmic poles on $ \mathcal{D} $} the subsheaf $ \Omega_{X}^{1}(\log \mathcal{D}) $ of $ j_{\ast}\Omega_{U}^1 $, such that, for all $ x \in X $,
$ \Gamma(I_{x},\Omega_{X}^{1}(\log \mathcal{D})) $ is given by
$$ \{s \in \Gamma(I_{x},j_{\ast}\Omega_{U}^1) \,|\, s = \displaystyle{\sum_{i=1}^k u_{i} d\log z_{i}} \, +  \displaystyle{\sum_{i=k+1}^n v_{i} dz_{i}} \} $$
where $ u_{i}, v_{i} $ are locally holomorphic functions and $ d \log z_{i} = \displaystyle{{dz_{i}} \over {z_{i}}} $.
\end{defin}

\begin{rem}
Since $ \mathcal{D} $ has normal crossings, $ \Omega_{X}^{1}(\log \mathcal{D}) $ is a locally free sheaf of rank $ n = dim X $,~\cite{De}. So, $ \Omega_{X}^{1}(\log \mathcal{D}) $ can be regarded as a rank-$ n $ vector bundle on $ X $ and it is called the \emph{logarithmic bundle attached to $ \mathcal{D} $}. In particular, $ \Omega_{X}^{1}(\log \mathcal{D}) $ admits the \emph{residue exact sequence}
\begin{equation}\label{eq:resexseq}
0 \longrightarrow \Omega_{X}^1 \longrightarrow \Omega_{X}^{1}(\log \mathcal{D}) \buildrel \rm res \over \longrightarrow \, \displaystyle{\bigoplus_{i=1}^{\ell} \mathcal{O}_{D_{i}}} \longrightarrow 0
\end{equation}
where \emph{res} denotes the \emph{Poincar\'e residue morphism}, \cite{Do}.
\end{rem}

Given a smooth algebraic variety $ X $, we are able to map an arrangement with normal crossings on $ X $ to a logarithmic vector bundle on $ X $:
\begin{equation}\label{eq:Torellimap}
\mathcal{D} \longmapsto \Omega_{X}^{1}(\log \mathcal{D}). 
\end{equation}
A natural question arises from this contruction: is it true that isomorphic logarithmic bundles come from the same arrangement? If the answer is positive, then we say that $ \mathcal{D} $ is an \emph{arrangement of Torelli type}, or a \emph{Torelli arrangement}. This is the so called \emph{Torelli problem for logarithmic bundles}. \\
In the next section we will discuss the main results concerning arrangements of hyperplanes with normal crossings in the complex projective space, mainly referring to Dolgachev and Kapranov (\cite{Do-Ka}), Ancona and Ottaviani (\cite{AO}), Vall\`es (\cite{V}).

\section{Some known results about hyperplane arrangements}

Let $ \mathcal{H} = \{H_{1}, \ldots, H_{\ell}\} $ be a hyperplane arrangement with normal crossings on $ \mathbf{P}^n $ and let $ \Omega_{\mathbf{P}^n}^{1}(\log \mathcal{H}) $ the corresponding logarithmic bundle. The description of the \emph{Torelli problem} in this case depends on $ \ell $. In this sense, if $ \mathcal{H} $ is made of \emph{few} hyperplanes, then it is not of \emph{Torelli type}:

\begin{thm0}\label{T:pochiiperpiani}$($\emph{Dolgachev-Kapranov 1993,~\cite{Do-Ka}}$)$\\
If $ 1 \leq \ell \leq n+1 $ then $ \Omega_{\mathbf{P}^n}^{1}(\log \mathcal{H}) \cong \mathcal{O}_{\mathbf{P}^n}^{\ell-1} \oplus \mathcal{O}_{\mathbf{P}^n}(-1)^{n+1-\ell}.  $
\end{thm0}

If $ \ell \geq n+2 $, Dolgachev and Kapranov (\cite{Do-Ka}) proved that $ \Omega_{\mathbf{P}^n}^{1}(\log \mathcal{H}) $ belongs to the family $ S_{n, \ell-n-1} $ of \emph{Steiner bundles} with parameters $ n $ and $ \ell-n-1 $, that is it admits the short exact sequence
\begin{equation}\label{eq:steinerfacile}
0 \longrightarrow \mathcal{O}_{\mathbf{P}^n}(-1)^{\ell-n-1} \longrightarrow \mathcal{O}_{\mathbf{P}^n}^{\ell-1} \longrightarrow \Omega_{\mathbf{P}^n}^{1}(\log \mathcal{H}) \longrightarrow 0.
\end{equation}
In particular $ \Omega_{\mathbf{P}^n}^{1}(\log \mathcal{H}) $  is stable in the sense of \emph{Mumford-Takemoto}. So we immediately get the following:

\begin{prop}\label{p:n+2iperpiani}
If $ \ell = n+2 $ then $ \Omega_{\mathbf{P}^n}^{1}(\log \mathcal{H}) \cong \mathbf{TP}^{n}(-1). $
\end{prop}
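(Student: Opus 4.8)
The plan is to read off the claim from the Steiner presentation (\ref{eq:steinerfacile}) combined with the Euler sequence. Putting $\ell=n+2$ in (\ref{eq:steinerfacile}) produces a short exact sequence
$$0 \longrightarrow \mathcal{O}_{\mathbf{P}^n}(-1) \buildrel \varphi \over \longrightarrow \mathcal{O}_{\mathbf{P}^n}^{\,n+1} \longrightarrow \Omega_{\mathbf{P}^n}^{1}(\log \mathcal{H}) \longrightarrow 0,$$
while twisting the Euler sequence $0\to\mathcal{O}_{\mathbf{P}^n}\to V\otimes\mathcal{O}_{\mathbf{P}^n}(1)\to\mathbf{TP}^{n}\to 0$ by $\mathcal{O}_{\mathbf{P}^n}(-1)$ exhibits $\mathbf{TP}^{n}(-1)$ as the cokernel of the tautological inclusion $\mathcal{O}_{\mathbf{P}^n}(-1)\hookrightarrow V\otimes\mathcal{O}_{\mathbf{P}^n}$, where $\mathbf{P}^n=\mathbf{P}(V)$. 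Hence it suffices to prove that \emph{every} injective bundle map $\varphi\colon\mathcal{O}_{\mathbf{P}^n}(-1)\to\mathcal{O}_{\mathbf{P}^n}^{\,n+1}$ with locally free cokernel is, after an automorphism of $\mathcal{O}_{\mathbf{P}^n}^{\,n+1}$, that tautological inclusion.

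First I would describe the space of such maps. A homomorphism $\mathcal{O}_{\mathbf{P}^n}(-1)\to\mathcal{O}_{\mathbf{P}^n}^{\,n+1}$ is an element of $H^0(\mathbf{P}^n,\mathcal{O}_{\mathbf{P}^n}(1))^{\oplus(n+1)}\cong V^{\vee}\otimes\mathbf{C}^{n+1}$; in coordinates $x_0,\dots,x_n$ it is an $(n+1)\times(n+1)$ scalar matrix $A$, the $j$-th component of $\varphi$ being the linear form $\sum_i A_{ij}x_i$. Evaluating at a point $[v]\in\mathbf{P}^n$, the fibre map $\varphi_{[v]}$ is, up to scalar, the vector $A^{\mathsf T}v$, so $\varphi$ fails to be a subbundle inclusion exactly along $\mathbf{P}(\ker A^{\mathsf T})$. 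Since $\Omega_{\mathbf{P}^n}^{1}(\log\mathcal{H})$ is locally free by the normal crossings hypothesis, $\varphi$ must be nowhere vanishing, which forces $A\in\mathrm{GL}_{n+1}(\mathbf{C})$. Post-composing $\varphi$ with the constant automorphism $A^{-1}$ of $\mathcal{O}_{\mathbf{P}^n}^{\,n+1}$ turns it into the tautological inclusion, and an automorphism of $\mathcal{O}_{\mathbf{P}^n}^{\,n+1}$ induces an isomorphism of cokernels, so $\Omega_{\mathbf{P}^n}^{1}(\log\mathcal{H})\cong\mathbf{TP}^{n}(-1)$.

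There is no genuine obstacle here; the one point worth spelling out is the invertibility of $A$, which is just the local freeness of the logarithmic bundle. Equivalently one could phrase the proof by observing that the family $S_{n,1}$ of Steiner bundles reduces to the single isomorphism class of $\mathbf{TP}^{n}(-1)$, but unwinding that statement is exactly the linear-algebra step above. As a consistency check I would verify that the two bundles share the same numerical invariants: both have rank $n$, and from the residue sequence (\ref{eq:resexseq}) one gets $c_1(\Omega^{1}_{\mathbf{P}^n}(\log\mathcal{H}))=-(n+1)+\ell=1=c_1(\mathbf{TP}^{n}(-1))$.
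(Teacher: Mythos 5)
Your argument is correct and is exactly the intended one: the paper derives the proposition ``immediately'' from the Steiner presentation (\ref{eq:steinerfacile}) with $\ell=n+2$, and your linear-algebra step (the matrix $A$ of linear forms must be invertible because the cokernel is locally free, after which $A^{-1}$ normalizes $\varphi$ to the Euler inclusion) is precisely the detail the paper leaves implicit. Nothing is missing.
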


If $ \ell $ is \emph{sufficiently large}, then the Torelli correspondence defined in (\ref{eq:Torellimap}) is very close to an injective map. In order to state the main result in this direction, we recall that, given $ m \in \mathbf{N} $ and a rational normal curve $ \mathcal{C}_{n}^{\vee} \subset (\mathbf{P}^n)^{\vee} $ of degree $ n $, the \emph{Schwarzenberger bundle of degree} $ m $ \emph{associated to} $ \mathcal{C}_{n}^{\vee} $ is the rank-$ n $ vector bundle over $ \mathbf{P}^n $ given by
$$ E_{m}(\mathcal{C}_{n}^{\vee}) = \overline{p}_{\ast}\overline{q}^{\ast} \mathcal{O}_{\mathcal{C}_{n}^{\vee}}\left({{m} \over {n}}\right) $$
where $ \mathcal{O}_{\mathcal{C}_{n}^{\vee}}({{m} \over {n}}) $ denotes the line bundle over $ \mathcal{C}_{n}^{\vee} $ that corresponds to $ \mathcal{O}_{\mathbf{P}^{1}}(m) $ through the chosen isomorphism between $ \mathbf{P}^{1} $ and $ \mathcal{C}_{n}^{\vee} $. In the previous equality $\overline{p} $ and $ \overline{q} $ are the restrictions to $ q^{-1}(\mathcal{C}_{n}^{\vee}) \subset \mathbf{F} $ of $ p $ and $ q $, the canonical projection morphisms from the incidence variety point-hyperplane $ \mathbf{F}$ to the factors $ \mathbf{P}^{n} $ and $  (\mathbf{P}^n)^{\vee} $ (for more detailed descriptions see \cite{Sch2} and \cite{Sch1}). In particular, Dolgachev and Kapranov (\cite{Do-Ka}) proved that, if $ m \geq n $, then $ E_{m}(\mathcal{C}_{n}^{\vee}) $ is a logarithmic bundle of an arrangement with $ \ell = m+2 $ hyperplanes with normal crossings osculating $ \mathcal{C}_{n} \subset \mathbf{P}^n $. \\
We have the following:

\begin{thm0}\label{T:V}$($\emph{Vall\`es 2000,~\cite{V}}$)$\\
Let $ \mathcal{H} = \{H_{1}, \ldots, H_{\ell}\} $ and $ \mathcal{K} = \{K_{1}, \ldots, K_{\ell}\} $ be arrangements of $ \ell \geq n+3 $ hyperplanes with normal crossings on $ \mathbf{P}^n $ such that 
\begin{equation}\label{eq:iperlogiso}
\Omega_{\mathbf{P}^n}^{1}(\log \mathcal{H}) \cong \Omega_{\mathbf{P}^n}^{1}(\log \mathcal{K}).
\end{equation}
Then one of the following two cases occurs:
\begin{itemize}
\item[$1)$] $ \mathcal{H} = \mathcal{K} $;
\item[$2)$] there exists a rational normal curve $ \mathcal{C}_{n} \subset \mathbf{P}^n $ such that $ H_{1}, \ldots, H_{\ell}, \\
K_{1}, \ldots, K_{\ell} $ osculate $ \mathcal{C}_{n} $ and $ \Omega_{\mathbf{P}^n}^{1}(\log \mathcal{H}) \cong \Omega_{\mathbf{P}^n}^{1}(\log \mathcal{K}) \cong E_{\ell - 2}(\mathcal{C}_{n}^{\vee}) $. 
\end{itemize}
\end{thm0}

In 1993 Dolgachev and Kapranov proved Theorem \ref{T:V} when $ \ell \geq 2n+3 $, focusing their attention on the set of \emph{jumping lines} of $ \Omega_{\mathbf{P}^n}^{1}(\log \mathcal{H}) $, \cite{Do-Ka}. On the contrary, Vall\`es' proof is based on the following idea: recover the hyperplanes of $ \mathcal{H} $ as \emph{unstable hyperplanes} of $ \Omega_{\mathbf{P}^n}^{1}(\log \mathcal{H}) $. We recall that, a hyperplane $ H $ in $ \mathbf{P}^n $ is \emph{unstable} for $ \Omega_{\mathbf{P}^n}^{1}(\log \mathcal{H}) $ if 
\begin{equation}\label{eq:unsthyp}
H^{0}(H, \Omega_{\mathbf{P}^n}^{1}(\log \mathcal{H})^{\vee}_{|_{H}}) \not= \{0\}. 
\end{equation} 
Theorem \ref{T:V} is a consequence of the following result:

\begin{thm0}\label{T:V0}$($\emph{Vall\`es 2000,~\cite{V}}$)$\\
Let $ \mathcal{H} = \{H_{1}, \ldots, H_{\ell}\} $ be an arrangement of $ \ell \geq n+3 $ hyperplanes with normal crossings on $ \mathbf{P}^n $. Then 
$$ \mathcal{H} = \{ H \subset \mathbf{P}^{n} \, hyperplane \, | \, H^{0}(H, \Omega^{1}_{\mathbf{P}^n}(\log \mathcal{H})_{|_{H}}^{\vee}) \not = \{0\} \}, $$
unless $ H_{1}, \ldots, H_{\ell} $ osculate a rational normal curve $ \mathcal{C}_{n} $ of degree $ n $ in $ \mathbf{P}^{n} $, in which case all the hyperplanes corresponding to the points of $ \mathcal{C}_{n}^{\vee} \subset (\mathbf{P}^{n})^{\vee} $ satisfy (\ref{eq:unsthyp}) and $ \Omega_{\mathbf{P}^n}^{1}(\log \mathcal{H}) \cong E_{\ell - 2}(\mathcal{C}_{n}^{\vee}) $.
\end{thm0}

Other interesting results concerning Steiner bundles and unstable hyperplanes have been proved by Ancona and Ottaviani in~\cite{AO}. \\
A few years ago, Dolgachev in \cite{Do} and Faenzi-Matei-Vall\`es in~\cite{FMV} investigated hyperplane arrangements without normal crossings and studied the Torelli problem for the subsheaf $ \widetilde{\Omega}^{1}_{\mathbf{P}^n}(\log \mathcal{H}) $ of $ \Omega^{1}_{\mathbf{P}^n}(\log \mathcal{H}) $ introduced in \cite{CHKS}. In particular, in \cite{FMV} is proved that $ \mathcal{H} $ is a Torelli arrangement if and only if $ H_{1}, \ldots, H_{\ell} $, seen as point in the dual projective space $ ({\mathbf{P}^n})^{\vee} $, belong to a \emph{Kronecker-Weierstrass variety of type $ (d, s) $}, which is essentially the union of a smooth rational curve of degree $ d $ with s linear subspaces. \\

\section{The higher degree case}

According to the previous section, the \emph{Torelli problem} for hyperplane arrangements  has been completely solved, but in the higher degree case it still represents an open question. \\
A first step towards this direction is due to Ueda and Yoshinaga. In~\cite{UY1} they studied this problem for one smooth cubic $ D $ in $ \mathbf{P}^2 $, focusing their attention on the set of jumping lines of the corresponding logarithmic bundle. In this sense they proved that, if $ D $ has non-vanishing $ j $-invariant, then the map in (\ref{eq:Torellimap}) is injective. Afterwards, in~\cite{UY2} they extended the previous result to the case of one smooth hypersurface in $ \mathbf{P}^n $. In particular they proved the following:

\begin{thm0}\label{T:UY2}$($\emph{Ueda-Yoshinaga 2009,~\cite{UY2}}$)$ \\
Let $ D = \{f = 0\} $ be a smooth hypersurface of degree $ d $ in $ \mathbf{P}^n $. $ \mathcal{D} = \{D\} $ is a \emph{Torelli arrangement} if and only if $ f $ is not of \emph{Sebastiani-Thom type}, that is we can't choose homogeneous coordinates $ x_{0}, \ldots, x_{n} $ of $ \mathbf{P}^n $ and a number $ k \in \{0, \ldots, n-1\} $ such that $ f(x_{0}, \ldots, x_{n}) = f_{1}(x_{0}, \ldots, x_{k}) + f_{2}(x_{k+1}, \ldots, x_{n}). $
\end{thm0}

We remark that, if $ d = 2 $, then $ f $ is always of Sebastiani-Thom type. Moreover, in the case of $ d = 3 $, the $ j $-invariant vanishes if and only if there is a choice of coordinates such that $ D $ is the zero locus of the \emph{Fermat polynomial} $ x_{0}^3+x_{1}^{3}+x_{2}^{3} $ which is equivalent to say that $ D $ is defined by a polynomial of Sebastiani-Thom type. We can state the following:

\begin{cor}
Let $ D $ be a general hypersurface of degree $ d $ in $ \mathbf{P}^n $. Then $ \mathcal{D} = \{D\} $ is \emph{Torelli} if and only if $ d \geq 3 $.
\end{cor}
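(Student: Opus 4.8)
The plan is to deduce the statement from Theorem~\ref{T:UY2}. Writing $D = \{f = 0\}$ with $f$ of degree $d$, that theorem reduces the Torelli property of $\{D\}$ to the condition that $f$ be not of Sebastiani--Thom type; since the smooth hypersurfaces form a dense open set in the space of degree-$d$ forms, the corollary amounts to the assertion that a general degree-$d$ form in $x_0, \ldots, x_n$ is of Sebastiani--Thom type if and only if $d \le 2$. For $d \le 2$ one checks that \emph{every} relevant form is already of Sebastiani--Thom type: a nonzero linear form is, after a linear change of coordinates, $x_0 = x_0 + 0$ (equivalently, Theorem~\ref{T:pochiiperpiani} gives $\Omega^1_{\mathbf{P}^n}(\log H)\cong\mathcal{O}_{\mathbf{P}^n}(-1)^n$ independently of $H$), and a smooth quadric can be brought to $x_0^2 + \cdots + x_n^2 = x_0^2 + (x_1^2+\cdots+x_n^2)$, both of Sebastiani--Thom type with $k=0$ (the remark preceding the corollary). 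Hence by Theorem~\ref{T:UY2} such $\{D\}$ is not Torelli.

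For $d \ge 3$ it suffices to show that the Sebastiani--Thom locus $\Sigma$ is a proper Zariski-closed subset of $H^0(\mathbf{P}^n,\mathcal{O}_{\mathbf{P}^n}(d))$, since then a general $f$ avoids it and $\{D\}$ is Torelli by Theorem~\ref{T:UY2}. (If $n=1$ there is nothing to prove, a smooth irreducible hypersurface of $\mathbf{P}^1$ being a point; so take $n \ge 2$.) For $0 \le k \le n-1$ let $V_k$ be the linear span of the degree-$d$ monomials that do not involve both a variable among $x_0,\ldots,x_k$ and a variable among $x_{k+1},\ldots,x_n$, so that $\dim V_k = \binom{k+d}{d}+\binom{n-k+d-1}{d}$ and $\Sigma = \bigcup_{k=0}^{n-1} GL_{n+1}(\mathbf{C})\cdot V_k$. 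This is a finite union of $GL_{n+1}$-orbits of linear spaces, so $\overline{\Sigma}=\bigcup_k \overline{GL_{n+1}(\mathbf{C})\cdot V_k}$ is closed; and, using that the block-diagonal subgroup $GL_{k+1}(\mathbf{C})\times GL_{n-k}(\mathbf{C})$ stabilizes $V_k$, the morphism $GL_{n+1}(\mathbf{C})\times V_k \to H^0(\mathbf{P}^n,\mathcal{O}_{\mathbf{P}^n}(d))$ has all fibres of dimension at least $(k+1)^2+(n-k)^2$, whence
$$
\dim\overline{GL_{n+1}(\mathbf{C})\cdot V_k}\ \le\ \dim V_k + (n+1)^2 - (k+1)^2 - (n-k)^2\ =\ \dim V_k + 2(k+1)(n-k).
$$
Comparing with $\dim H^0(\mathbf{P}^n,\mathcal{O}_{\mathbf{P}^n}(d)) = \binom{n+d}{d}$, it remains to check that the number $\binom{n+d}{d}-\dim V_k$ of ``mixed'' monomials strictly exceeds $2(k+1)(n-k)$: the monomials $x_i^a x_j^b$ with $i\le k<j$ and $a,b\ge 1$, $a+b=d$, already give $(d-1)(k+1)(n-k)$ of them, which settles the case $d\ge 4$, while for $d=3$ strictness follows from the presence of at least one further mixed monomial $x_{i_1}x_{i_2}x_j$ with $i_1\neq i_2$ in the same block (possible since $n\ge 2$ forces one block to have size $\ge 2$). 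Therefore $\overline{\Sigma}$ is a proper closed subset, as needed.

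The only step that is not purely formal is this dimension estimate for $d\ge 3$; the delicate point is that the naive bound $\dim V_k + \dim GL_{n+1}(\mathbf{C})$ overshoots $\binom{n+d}{d}$, so one must genuinely exploit the large block-diagonal stabilizer of $V_k$ to push the dimension of each orbit closure strictly below $\binom{n+d}{d}$ for every $k$.
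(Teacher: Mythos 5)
Your proof is correct and follows the same route the paper intends: reduce to Theorem~\ref{T:UY2}, observe that for $d\le 2$ every smooth $f$ is of Sebastiani--Thom type (so $\{D\}$ is never Torelli), and for $d\ge 3$ show that a general $f$ is not of that type. The paper states the corollary with no argument beyond the preceding remark, leaving the genericity claim for $d\ge 3$ implicit; your dimension count for the orbit closures $\overline{GL_{n+1}(\mathbf{C})\cdot V_k}$ --- in particular the use of the block-diagonal stabilizer to beat the naive bound, and the extra mixed monomial needed to get strict inequality when $d=3$ --- supplies exactly the missing step, and I checked that the arithmetic ($2(k+1)(n-k)$ versus $(d-1)(k+1)(n-k)$ mixed monomials of the form $x_i^ax_j^b$) is right.
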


At the best of my knowledge, in the mathematical literature there aren't descriptions concerning the the higher degree case with $ \ell \geq 2 $. In this sense, let $ \mathcal{D} = \{D_{1}, \ldots, D_{\ell}\} $ be an arrangement of smooth hypersurfaces with normal crossings on $ \mathbf{P}^n $ and let $ \Omega_{\mathbf{P}^n}^{1}(\log \mathcal{D}) $ be the associated logarithmic bundle. Assume that, for all $ i \in \{1, \ldots, \ell\} $, $ D_{i} = \{f_{i} = 0\} $, where $ f_{i} $ is a homogeneous polynomial of degree $ d_{i} $ in $ x_{0}, \ldots, x_{n} $. We denote by $ \partial_{j}f_{i} $ the partial derivative of $ f_{i} $ with respect to $ x_{j} $. \\
Our investigations are based on the following:

\begin{thm0}\label{T:Ancona}$($\emph{Ancona,~\cite{AppuntiAncona}}$)$\\
The dual bundle of $ \Omega_{\mathbf{P}^n}^{1}(\log \mathcal{D}) $ admits the short exact sequence
\begin{equation}\label{eq:Anconas}
0 \longrightarrow \Omega_{\mathbf{P}^n}^{1}(\log \mathcal{D})^{\vee} \longrightarrow \mathcal{O}_{\mathbf{P}^n}(1)^{n+1} \oplus \mathcal{O}_{\mathbf{P}^n}^{\ell-1} \buildrel \rm N \over \longrightarrow \displaystyle{\bigoplus_{i=1}^{\ell} \mathcal{O}_{\mathbf{P}^n}(d_{i})} \longrightarrow 0
\end{equation}
where $ N $ is the $ \ell \times (n + \ell) $ matrix 
\begin{equation}\label{eq:matrixAncona}
N = \begin{pmatrix} 
\partial_{0}f_{1} & \cdots & \partial_{n}f_{1} & f_{1} & 0 & \cdots & 0 \cr 
\partial_{0}f_{2} & \cdots & \partial_{n}f_{2} & 0 & f_{2} & {} & \vdots \cr
\vdots & {} & \vdots & \vdots & {} & \ddots & 0 \cr 
\partial_{0}f_{\ell-1} & \cdots & \partial_{n}f_{\ell-1} & 0 & \cdots & 0 & f_{\ell-1} \cr
\partial_{0}f_{\ell} & \cdots & \partial_{n}f_{\ell} & 0 & \cdots & \cdots & 0 \cr
\end{pmatrix}. 
\end{equation}
\end{thm0}

\begin{proof}
As in \cite{Do}, let us denote by $ S $ the polynomial algebra $ \mathbf{C}[x_{0}, \ldots, x_{n}] $ and let 
$$ \Omega^{1}_{S} = <dx_{0}, \ldots, dx_{n}>_{S} \cong S(-1)^{n+1} $$
$$ Der_{S} = <{{\partial} \over {\partial x_{0}}}, \ldots, {{\partial} \over {\partial x_{n}}}>_{S} \cong S(1)^{n+1} $$
be, respectively, the graded $ S $-module of differentials and the graded $ S $-module of derivations. The Euler derivation $ \xi = \displaystyle{\sum_{i=0}^{n} x_{i} {{\partial} \over {\partial x_{i}}}} $ defines a homomorphism of graded $ S $-modules 
$$ \Omega^{1}_{S} \longrightarrow S \quad\,\,\,\, \,\, \omega = \displaystyle{\sum_{i=0}^{n} h_{i}dx_{i}}  \longmapsto \omega(\xi) = \displaystyle{\sum_{i=0}^{n} h_{i}x_{i}} $$
whose kernel corresponds to the sheaf $ \Omega^{1}_{\mathbf{P}^{n}} $. Moreover the cokernel of the homomorphism
$$ S \longrightarrow Der_{S} \quad\,\,\,\, p \longmapsto p \xi $$
corresponds to $ \mathbf{TP}^{n} $, which is the dual sheaf of $ \Omega^{1}_{\mathbf{P}^{n}} $.
So we have a pairing
$$ \Omega^{1}_{\mathbf{P}^{n}} \times \mathbf{TP}^{n} \buildrel \rm < \cdot, \cdot > \over \longrightarrow \mathcal{O}_{\mathbf{P}^{n}} $$
$$ \left(\displaystyle{\sum_{i=0}^{n} h_{i}dx_{i}}, \displaystyle{\sum_{i=0}^{n} b_{i}{{\partial} \over {\partial x_{i}}}}\right) \longmapsto \displaystyle{\sum_{i=0}^{n} h_{i}b_{i}} $$
and, if $ U $ is an open subset of $ \mathbf{P}^{n} $, then $ \Gamma(U, \Omega_{\mathbf{P}^n}^{1}(\log \mathcal{D})^{\vee}) $ is given by
$$  \{ v \in \Gamma(U, \mathbf{TP}^{n}) \, | \, \forall\,local\,equation\, g_{i}\,of\,D_{i}\,in\,U <d\log g_{i},v> \, is \, holomorphic \} $$
where we recall that $ <d\log g_{i},v> = <\displaystyle{{d g_{i}} \over {g_{i}}}, v> $.  \\
Assume that $ x_{0} \not = 0 $ and let $ z_{j} = \displaystyle{{x_{j}} \over {x_{0}}} $ for all $ j \in \{1, \ldots, n\} $.
Since for all $ i \in \{1, \ldots, \ell\} $ we have that $ f_{i}(x_{0}, \ldots, x_{n}) = x_{0}^{d_{i}}f_{i}(1, \displaystyle{{x_{1}} \over {x_{0}}}, \ldots, \displaystyle{{x_{n}} \over {x_{0}}}) = x_{0}^{d_{i}} g_{i}(z_{1}, \ldots, z_{n}) $, the chain rule implies that, for all $ j \in \{1, \ldots, n\} $,
$$ \displaystyle{{\partial g_{i}} \over {\partial z_{j}}} = \displaystyle{{1} \over {x_{0}^{d_{i}-1}}} {{\partial f_{i}} \over {\partial x_{j}}}. $$
So we get that
$$ dg_{i} = \displaystyle{\sum_{j=1}^{n} {{\partial g_{i}} \over {\partial z_{j}}} dz_{j}} = \displaystyle{\sum_{j=1}^{n}} {{1} \over {x_{0}^{d_{i}-1}}} {{\partial f_{i}} \over {\partial x_{j}}} \left( \displaystyle{{dx_{j}} \over {x_{0}}} - \displaystyle{{x_{j}} \over {x_{0}^{2}}} dx_{0} \right) =   $$
$$ = \displaystyle{{1} \over {x_{0}^{d_{i}}}} \displaystyle{\sum_{j=1}^{n} {{\partial f_{i}} \over {\partial x_{j}}} dx_{j} } - \displaystyle{{dx_{0}} \over {x_{0}^{d_{i}+1}}} \displaystyle{\sum_{j=1}^{n}} {{\partial f_{i}} \over {\partial x_{j}}} x_{j} = \displaystyle{{1} \over {x_{0}^{d_{i}}}} \displaystyle{\sum_{j=1}^{n} {{\partial f_{i}} \over {\partial x_{j}}} dx_{j} } - \displaystyle{{dx_{0}} \over {x_{0}^{d_{i}+1}}} (d_{i})(f_{i}). $$
Thus we have that $ v = \displaystyle{\sum_{j=0}^{n} b_{j}{{\partial} \over {\partial x_{j}}}} \in \Gamma(U, \Omega_{\mathbf{P}^n}^{1}(\log \mathcal{D})^{\vee}) $ if and only if for all $ i \in \{1, \ldots, \ell\} $ there exists a holomorphic function $ \alpha_{i} $ such that 
$$ \displaystyle{\sum_{j=1}^{n} {{\partial f_{i}} \over {\partial x_{j}}} b_{j}} = \alpha_{i} f_{i} \quad\quad modulo \,\,\, \xi. $$
$ \Omega_{\mathbf{P}^n}^{1}(\log \mathcal{D})^{\vee} $ is the cohomology of the monad given by
$$ 0 \longrightarrow \mathcal{O}_{\mathbf{P}^{n}} \buildrel \rm \mathcal{M} \over \longrightarrow \mathcal{O}_{\mathbf{P}^{n}}(1)^{n+1} \oplus \mathcal{O}_{\mathbf{P}^{n}}^{\ell} \buildrel \rm \mathcal{N} \over \longrightarrow \displaystyle{\bigoplus_{i=1}^{\ell}\mathcal{O}_{\mathbf{P}^{n}}(d_{i})} \longrightarrow 0 $$
where $ \mathcal{M} $ is the $ (n+1+\ell) \times 1 $ matrix
$$ \mathcal{M} = \,\displaystyle{^{t}}\begin{pmatrix} x_{0} & \cdots & x_{n} & d_{1} & \cdots & d_{\ell} \cr \end{pmatrix} $$
and $ \mathcal{N} $ is the $ \ell \times (n+1+\ell) $ matrix
$$ \mathcal{N} = \begin{pmatrix} 
\partial_{0}f_{1} & \cdots & \partial_{n}f_{1} & f_{1} & 0 & \cdots & \cdots & 0 \cr 
\partial_{0}f_{2} & \cdots & \partial_{n}f_{2} & 0 & f_{2} & 0 & \cdots & 0 \cr
\vdots & {} & \vdots & \vdots & {} & \ddots & {} & \vdots \cr 
\vdots & {} & \vdots & \vdots & {} & {} &  \ddots & 0 \cr
\partial_{0}f_{\ell} & \cdots & \partial_{n}f_{\ell} & 0 & \cdots & \cdots & 0 & f_{\ell} \cr
\end{pmatrix}. $$
We remark that if we multiply $ \mathcal{N} $ with the square matrix of order $ n+\ell+1 $ 
$$ \begin{pmatrix} 
{} & {} & {} & {} & {} & x_{0} \cr
{} & {} & {} & {} & {} & \vdots \cr
{} & {} & {} & {}  & {} & x_{n} \cr
{} & {} & I_{n+\ell} & {} & {} & -d_{1} \cr
{} & {} & {} & {} & {} & \vdots \cr
{} & {} & {} & {} & {} & -d_{\ell - 1} \cr
0 & {} & \cdots & {} & 0 & -d_{\ell} \cr
\end{pmatrix} $$
and we apply the Euler formula, then we can remove the last column of $ \mathcal{N} $ so that $ \mathcal{N} $ takes the form of $ N $, the matrix in (\ref{eq:matrixAncona}). So $ \Omega_{\mathbf{P}^n}^{1}(\log \mathcal{D})^{\vee} $ admits the short exact sequence (\ref{eq:Anconas}), as desired.   
\end{proof}

\begin{rem}
At the best of my knowledge, the proof of the above theorem doesn't appear in the mathematical literature, even if a sequence similar to (\ref{eq:Anconas}) has been used in \cite{HKS} (see step 1 in algorithm 18 of section 7).
\end{rem}

\begin{rem}
Theorem~\ref{T:Ancona} holds in particular when we consider a hyperplane arrangement, that is when $ d_{i} = 1 $ for all $ i $. Indeed, if $ \ell \geq n+2 $ then (\ref{eq:Anconas}) becomes the dualized sequence of the \emph{Steiner sequence} (\ref{eq:steinerfacile}) and if $ \ell \leq n+1 $ then (\ref{eq:Anconas}) implies Theorem~\ref{T:pochiiperpiani}.   
\end{rem}

\section{Many higher degree hypersurfaces}

Arrangements consisting of a \emph{sufficiently large} number of hypersurfaces with normal crossings can be studied by using the main results concerning hyperplane arrangements with normal crossings (\cite{Do-Ka}, \cite{V}) that are recalled in section $ 3 $.

Let $ \mathcal{D} = \{D_{1}, \ldots, D_{\ell}\} $ be an arrangement of $ \ell $ smooth hypersurfaces of the same degree $ d \geq 2 $ with normal crossings on $ \mathbf{P}^n $, $ n \geq 2 $. We denote by $ \Omega_{\mathbf{P}^n}^{1}(\log \mathcal{D}) $ the corresponding logarithmic bundle. If $ n=2 $ each $ D_{i} $ is a curve, so we deal with arrangements of conics, cubics, and so on.

\begin{rem}\label{r:Veronese}
Let us consider the Veronese map of degree $ d $, that is 
$$ \nu_{d} : \mathbf{P}^n \longrightarrow \mathbf{P}^{N} $$
$$ \quad [x_{0}, \ldots, x_{n}] \longmapsto [\ldots \, x^{I} \ldots] $$
where $ N = {{n+d} \choose {d}} - 1 $ and $ x^{I} $ ranges over all monomials of degree $ d $ in $ x_{0},\ldots, x_{n} $. Let $ V_{d} = \nu_{d}(\mathbf{P}^n) $ be its image. According to~\cite{H}, each hypersurface of degree $ d $ in $ \mathbf{P}^n $ is a hyperplane section of $ V_{d} \subset \mathbf{P}^N $ and viceversa.  
\end{rem}

For this reason we are allowed to associate to $ \mathcal{D} = \{D_{1}, \ldots, D_{\ell}\} $ a hyperplane arrangement $ \mathcal{H} = \{H_{1}, \ldots, H_{\ell}\} $ on $ \mathbf{P}^N $. In particular, let us assume that $ \mathcal{H} $ has normal crossings and let us denote by $ \Omega_{\mathbf{P}^N}^{1}(\log \mathcal{H}) $ the logarithmic bundle attached to it. As we can see in the proof of Theorem \ref{t54} (exact sequence (\ref{eq:prop211Dolg})), the vector bundles $ \Omega_{\mathbf{P}^n}^{1}(\log \mathcal{D}) $ and $ \Omega_{\mathbf{P}^N}^{1}(\log \mathcal{H}) $ are strictly related one to the other. 

Given the logarithmic bundle $ \Omega_{\mathbf{P}^n}^{1}(\log \mathcal{D}) $, the key idea is to reconstruct the hypersurfaces in $ \mathcal{D} $ as \emph{unstable hypersurfaces} of degree $ d $ of $ \Omega_{\mathbf{P}^n}^{1}(\log \mathcal{D}) $, using the fact that we are able to deal with hyperplanes. The notion of unstable hypersurface that we introduce in the following is very close to the one of unstable hyperplane recalled in (\ref{eq:unsthyp}).

\begin{defin}\label{d52}
Let $ D \subset \mathbf{P}^n $ be a hypersurface of degree $ d $. We say that $ D $ is \emph{unstable} for $ \Omega_{\mathbf{P}^n}^{1}(\log \mathcal{D}) $ if the following condition holds:
\begin{equation}\label{eq:ipinstabile}
H^{0}(D, {\Omega_{\mathbf{P}^n}^{1}(\log \mathcal{D})}^{\vee}_{|_{D}}) \not= \{0\}.
\end{equation}
\end{defin}

\begin{rem}\label{r53}
The previous definition is meaningful if 
$$ \ell > \displaystyle{{n+1} \over {d}}. $$
Indeed, in this case, from the short exact sequence given in Theorem \ref{T:Ancona} for $ \Omega_{\mathbf{P}^n}^{1}(\log \mathcal{D}) $
\begin{equation}\label{eq:Anconapermolteconiche}
0 \longrightarrow \mathcal{O}_{\mathbf{P}^n}(-d)^{\ell} \longrightarrow \mathcal{O}_{\mathbf{P}^n}(-1)^{n+1} \oplus \mathcal{O}_{\mathbf{P}^n}^{\ell -1} \longrightarrow \Omega_{\mathbf{P}^n}^{1}(\log \mathcal{D})  \longrightarrow 0
\end{equation}
it follows that $ c_{1}(\Omega_{\mathbf{P}^n}^{1}(\log \mathcal{D})^{\vee}) < 0 $ and so, by using Bohnhorst-Spindler criterion $($\cite{BS}$)$, $ \Omega_{\mathbf{P}^n}^{1}(\log \mathcal{D})^{\vee} $ is stable in the sense of \emph{Mumford-Takemoto}. These two facts imply that $ h^{0}(\mathbf{P}^n, \Omega_{\mathbf{P}^n}^{1}(\log \mathcal{D})^{\vee}) = 0 $.  
\end{rem}

We have the following:

\begin{lemma}\label{l:inst}
Let $ \mathcal{D} = \{D_{1}, \ldots, D_{\ell}\} $ be an arrangement of smooth hypersurfaces with normal crossings on $ \mathbf{P}^n $. Then $ D_{j} $ is unstable for $ \Omega_{\mathbf{P}^n}^{1}(\log \mathcal{D}) $ for all $ j \in \{1, \ldots, \ell\} $.
\end{lemma}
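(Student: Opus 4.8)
The plan is to read off an explicit nonzero section directly from Ancona's resolution (\ref{eq:Anconas}). Since the logarithmic bundle $ \Omega^{1}_{\mathbf{P}^n}(\log \mathcal{D}) $ and the unstability condition (\ref{eq:ipinstabile}) do not depend on the ordering of $ D_{1}, \ldots, D_{\ell} $, it suffices to treat the case $ j = \ell $ and then relabel. First I would restrict the short exact sequence of Theorem~\ref{T:Ancona} to $ D_{\ell} = \{f_{\ell} = 0\} $. Because the rightmost term $ \bigoplus_{i}\mathcal{O}_{\mathbf{P}^n}(d_{i}) $ is locally free, one has $ \mathrm{Tor}_{1}\bigl(\bigoplus_{i}\mathcal{O}_{\mathbf{P}^n}(d_{i}),\,\mathcal{O}_{D_{\ell}}\bigr)=0 $, so tensoring (\ref{eq:Anconas}) with $ \mathcal{O}_{D_{\ell}} $ keeps the sequence short exact:
\[
0 \longrightarrow \Omega^{1}_{\mathbf{P}^n}(\log \mathcal{D})^{\vee}_{|_{D_{\ell}}} \longrightarrow \mathcal{O}_{D_{\ell}}(1)^{n+1}\oplus \mathcal{O}_{D_{\ell}}^{\ell-1} \longrightarrow \bigoplus_{i=1}^{\ell}\mathcal{O}_{D_{\ell}}(d_{i}) \longrightarrow 0,
\]
the middle map being $ N_{|_{D_{\ell}}} $. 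Taking global sections and using left exactness of $ H^{0} $, the space $ H^{0}(D_{\ell}, \Omega^{1}_{\mathbf{P}^n}(\log \mathcal{D})^{\vee}_{|_{D_{\ell}}}) $ is identified with the kernel of $ N_{|_{D_{\ell}}} $ acting on $ H^{0}(D_{\ell}, \mathcal{O}_{D_{\ell}}(1)^{n+1}\oplus\mathcal{O}_{D_{\ell}}^{\ell-1}) $.

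Next I would exhibit the candidate section $ s = {}^{t}(x_{0}, \ldots, x_{n}, -d_{1}, \ldots, -d_{\ell-1}) $, viewed in $ H^{0}(D_{\ell}, \mathcal{O}_{D_{\ell}}(1)^{n+1}\oplus\mathcal{O}_{D_{\ell}}^{\ell-1}) $, and check that $ N_{|_{D_{\ell}}}\,s = 0 $ via the Euler identity $ \sum_{k=0}^{n} x_{k}\,\partial_{k}f_{i} = d_{i}f_{i} $. Indeed, for $ i \in \{1, \ldots, \ell-1\} $ the $ i $-th entry of $ Ns $ is $ \sum_{k}(\partial_{k}f_{i})x_{k} - d_{i}f_{i} = d_{i}f_{i}-d_{i}f_{i}=0 $, while the $ \ell $-th entry is $ \sum_{k}(\partial_{k}f_{\ell})x_{k} = d_{\ell}f_{\ell} $, which restricts to $ 0 $ on $ D_{\ell} $. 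Hence $ s $ lifts to a global section of $ \Omega^{1}_{\mathbf{P}^n}(\log \mathcal{D})^{\vee}_{|_{D_{\ell}}} $, and it is nonzero because $ x_{0}, \ldots, x_{n} $ have no common zero on $ \mathbf{P}^n $ and so cannot all vanish identically along $ D_{\ell} $. Therefore $ H^{0}(D_{\ell}, \Omega^{1}_{\mathbf{P}^n}(\log \mathcal{D})^{\vee}_{|_{D_{\ell}}})\neq\{0\} $, i.e.\ $ D_{\ell} $ is unstable in the sense of Definition~\ref{d52}; relabeling yields the statement for every $ D_{j} $.

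The only delicate points are bookkeeping ones. The matrix $ N $ treats the last hypersurface asymmetrically (its row carries no $ f $-column), which is precisely why one should move $ D_{j} $ into the last position before computing — with the Euler section alone one cannot otherwise cancel the $ d_{\ell}f_{\ell} $ term against a constant. One must also verify the $ \mathrm{Tor}_{1} $-vanishing, so that restricting (\ref{eq:Anconas}) to $ D_{j} $ still produces a left-exact sequence and the kernel computation on sections is legitimate. Beyond that, the argument is essentially a one-line application of Euler's formula, so I do not expect a genuine obstacle; note only that, consistently with Remark~\ref{r53}, the section we produce lives along $ D_{\ell} $ and does not extend to a global section on all of $ \mathbf{P}^n $.
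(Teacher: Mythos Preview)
Your argument is correct, but it proceeds along a different route than the paper. The paper does not touch the resolution~(\ref{eq:Anconas}) here at all: instead it restricts the residue exact sequence~(\ref{eq:resexseq}) to $D_{j}$, obtaining a surjection $\Omega^{1}_{\mathbf{P}^n}(\log\mathcal{D})_{|_{D_{j}}}\twoheadrightarrow \mathcal{O}_{D_{j}}\oplus\bigoplus_{i\neq j}\mathcal{O}_{D_{i}\cap D_{j}}$, and then projects onto the $\mathcal{O}_{D_{j}}$ summand to produce a nonzero element of $\mathrm{Hom}(\Omega^{1}_{\mathbf{P}^n}(\log\mathcal{D})_{|_{D_{j}}},\mathcal{O}_{D_{j}})\cong H^{0}(D_{j},\Omega^{1}_{\mathbf{P}^n}(\log\mathcal{D})^{\vee}_{|_{D_{j}}})$. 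That argument is shorter and more conceptual: it uses nothing specific to $\mathbf{P}^n$ and would work verbatim for any normal crossing arrangement on any smooth $X$. Your approach, by contrast, is tied to the explicit presentation~(\ref{eq:Anconas}) on projective space, but has the advantage of actually \emph{writing down} the section --- it is the Euler field $\sum x_{k}\partial_{k}$ corrected by the degree constants --- and of showing that this section is in fact nowhere vanishing on $D_{\ell}$, which is more information than the paper's proof yields. (As a minor simplification of your nonvanishing check: when $\ell\geq 2$ the constant entries $-d_{1},\ldots,-d_{\ell-1}$ already force $s\neq 0$, so one only needs your $x_{i}$-argument in the case $\ell=1$.)
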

\begin{proof}
Let us consider the \emph{residue exact sequence} (\ref{eq:resexseq}) for $ \Omega_{\mathbf{P}^n}^{1}(\log \mathcal{D}) $, that is
$$ 0 \longrightarrow \Omega_{\mathbf{P}^n}^{1} \longrightarrow \Omega_{\mathbf{P}^n}^{1}(\log \mathcal{D}) \buildrel \rm res \over\longrightarrow \displaystyle\bigoplus_{i=1}^\ell \mathcal{O}_{D_{i}}  \longrightarrow 0. $$
If we restrict it to $ D_{j} $, we get a surjective map 
$$ \Omega_{\mathbf{P}^n}^{1}(\log \mathcal{D})_{|_{D_{j}}} \longrightarrow \mathcal{O}_{D_{j}} \oplus \displaystyle\bigoplus_{i=1, i \not= j}^\ell \mathcal{O}_{D_{i} \cap D_{j}} $$
from which we obtain a non zero map 
$$ \Omega_{\mathbf{P}^n}^{1}(\log \mathcal{D})_{|_{D_{j}}} \longrightarrow \mathcal{O}_{D_{j}}. $$
Thus 
$$ H^{0}(D_{j}, {\Omega_{\mathbf{P}^n}^{1}(\log \mathcal{D})}^{\vee}_{|_{D_{j}}}) = Hom(\mathcal{O}_{D_{j}}, {\Omega_{\mathbf{P}^n}^{1}(\log \mathcal{D})}^{\vee}_{|_{D_{j}}}) \not= \{0\} $$
that is $ D_{j} $ satisfies (\ref{eq:ipinstabile}). 
\end{proof}

Now we can state and prove the main result concerning the \emph{Torelli problem} in the case of arrangements with a \emph{large} number of hypersurfaces of the same degree.

\begin{thm0}\label{t54}
Let $ \mathcal{D} = \{D_{1}, \ldots, D_{\ell}\} $ be an arrangement of smooth hypersurfaces of degree $ d \geq 2 $ with normal crossings on $ \mathbf{P}^n $, with $ n \geq 2 $. Let $ \mathcal{H} = \{H_{1}, \ldots, H_{\ell}\} $ be the corresponding hyperplane arrangement on $ \mathbf{P}^N $ in the sense of Remark \ref{r:Veronese}. Assume that:
\begin{itemize}
\item[$1)$] $ \ell \geq N+ 4 $;
\item[$2)$] $ \mathcal{H} $ is a hyperplane arrangement with normal crossings;
\item[$3)$] $ H_{1}, \ldots, H_{\ell} $ don't osculate a rational normal curve of degree $ N $ in $ \mathbf{P}^N $.
\end{itemize}
Then $ \mathcal{D} $ is equal to the following set:
$$ \{ D \subset \mathbf{P}^n \, \emph{smooth irreducible hypersurface of degree d} \, | \, D \, \emph{satisfies} ~(\ref{eq:ipinstabile})\}. $$
\end{thm0}

\begin{proof}
The first inclusion is a direct consequence of Lemma \ref{l:inst}. \\
So, let us assume that $ D \subset \mathbf{P}^n $ is a smooth irreducible hypersurface of degree $ d $ which is unstable for $ \Omega_{\mathbf{P}^n}^{1}(\log \mathcal{D}) $, we want to prove that $ D \in \mathcal{D} $. It suffices to show that the hyperplane $ H \subset \mathbf{P}^N $ associated to $ D $ by means of $ \nu_{d} $ is unstable for $ \Omega_{\mathbf{P}^N}^{1}(\log \mathcal{H}) $: namely, if this is the case, since hypothesis $ 1),\, 2), \, 3) $ hold, Theorem \ref{T:V0} assures us that $ H \in \mathcal{H} $, that is $ H = H_{i} $ for $ i \in \{1, \ldots, \ell\} $ and so $ D = D_{i} \in \mathcal{D} $. \\
Since $ V_{d} $ is a non singular subvariety of $ \mathbf{P}^N $ which, by construction, intersects transversally $ \mathcal{H} $, from Proposition $ 2.11 $ of~\cite{Do} we get the following exact sequence:
\begin{equation}\label{eq:prop211Dolg}
0 \longrightarrow  \mathcal{N}_{V_{d}, \, \mathbf{P}^N}^{\vee} \longrightarrow \Omega_{\mathbf{P}^N}^{1}(\log \mathcal{H})_{|_{V_{d}} } \longrightarrow \Omega_{V_{d}}^{1}(\log \mathcal{H} \cap V_{d}) \longrightarrow 0 
\end{equation}
where $ \mathcal{N}_{V_{d}, \, \mathbf{P}^N}^{\vee} $ denotes the conormal sheaf of $ V_{d} $ in $ \mathbf{P}^N $. \\
We remark that $ V_{d} \cong \mathbf{P}^n $ and $ \mathcal{D} = \mathcal{H} \cap V_{d} $, so (\ref{eq:prop211Dolg}) becomes
\begin{equation}\label{eq:prop211Dolg2}
0 \longrightarrow  \mathcal{N}_{V_{d}, \, \mathbf{P}^N}^{\vee} \longrightarrow \Omega_{\mathbf{P}^N}^{1}(\log \mathcal{H})_{|_{\mathbf{P}^n}} \longrightarrow \Omega_{\mathbf{P}^n}^{1}(\log \mathcal{D}) \longrightarrow 0. 
\end{equation}
Restricting (\ref{eq:prop211Dolg2}) to $ D $ and then applying $ \mathcal{H}om(\cdot, \, \mathcal{O}_{D}) $ we obtain the following short exact sequence:
\begin{equation}
0 \longrightarrow {\Omega_{\mathbf{P}^n}^{1}(\log \mathcal{D})}^{\vee}_{|_{D}} \longrightarrow {\Omega_{\mathbf{P}^N}^{1}(\log \mathcal{H})}^{\vee}_{|_{D}} \longrightarrow ({\mathcal{N}_{V_{d}, \, \mathbf{P}^N\,_{|_{D}}}^{\vee}})^{\vee} \longrightarrow 0. 
\end{equation}
Finally, passing to cohomology we get
$$ 0 \longrightarrow H^{0}(D, {\Omega_{\mathbf{P}^n}^{1}(\log \mathcal{D})}^{\vee}_{|_{D}}) \longrightarrow H^{0}(D, {\Omega_{\mathbf{P}^N}^{1}(\log \mathcal{H})}^{\vee}_{|_{D}}). $$
By assumption, $ D $ is unstable for $ \Omega_{\mathbf{P}^n}^{1}(\log \mathcal{D}) $, that is condition (\ref{eq:ipinstabile}) holds. Necessarily it has to be 
\begin{equation}\label{eq:keyfact1}
H^{0}(D, {\Omega_{\mathbf{P}^N}^{1}(\log \mathcal{H})}^{\vee}_{|_{D}}) \not= 0.
\end{equation}
Now, let $ \mathcal{I}_{V_{d}, \, \mathbf{P}^N} $ be the ideal sheaf of $ V_{d} $ in $ \mathbf{P}^N $; we have the exact sequence
\begin{equation}\label{eq:idealsheafsequence}
0 \longrightarrow \mathcal{I}_{V_{d}, \, \mathbf{P}^N} \longrightarrow \mathcal{O}_{\mathbf{P}^N} \longrightarrow \mathcal{O}_{V_{d}} \longrightarrow 0.
\end{equation}
Since $ V_{d} \not\subset H $ we have
\begin{equation}\label{eq:idealsheafsequenceres}
0 \longrightarrow \mathcal{I}_{V_{d} \cap H, \, H} \longrightarrow \mathcal{O}_{H} \longrightarrow \mathcal{O}_{D} \longrightarrow 0
\end{equation}
By tensor product with $ {\Omega_{\mathbf{P}^N}^{1}(\log \mathcal{H})}^{\vee} $, (\ref{eq:idealsheafsequenceres}) becomes
$$ 0 \longrightarrow \mathcal{I}_{V_{d} \cap H, \, H} \otimes {\Omega_{\mathbf{P}^N}^{1}(\log \mathcal{H})}^{\vee}_{|_{H}} \longrightarrow {\Omega_{\mathbf{P}^N}^{1}(\log \mathcal{H})}^{\vee}_{|_{H}} \longrightarrow {\Omega_{\mathbf{P}^N}^{1}(\log \mathcal{H})}^{\vee}_{|_{D}} \longrightarrow 0. $$
Passing to cohomology we get
$$ 0 \longrightarrow H^{0}(H, \mathcal{I}_{V_{d} \cap H, \, H} \otimes {\Omega_{\mathbf{P}^N}^{1}(\log \mathcal{H})}^{\vee}_{|_{H}}) \longrightarrow H^{0}(H, {\Omega_{\mathbf{P}^N}^{1}(\log \mathcal{H})}^{\vee}_{|_{H}}) \longrightarrow $$
$$ \,\,\,\,\,\, \longrightarrow H^{0}(D, {\Omega_{\mathbf{P}^N}^{1}(\log \mathcal{H})}^{\vee}_{|_{D}}) \longrightarrow H^{1}(H, \mathcal{I}_{V_{d} \cap H, \, H} \otimes {\Omega_{\mathbf{P}^N}^{1}(\log \mathcal{H})}^{\vee}_{|_{H}}). \quad\quad  $$
To conclude the proof it suffices to show that 
\begin{equation}\label{eq:keyfact2}
H^{1}(H, \mathcal{I}_{V_{d} \cap H, \, H} \otimes {\Omega_{\mathbf{P}^N}^{1}(\log \mathcal{H})}^{\vee}_{|_{H}}) = \{0\}.
\end{equation}
In order to prove (\ref{eq:keyfact2}), we remark that, since $ \ell \geq N+4 $ (hypothesis 1)) and $ \mathcal{H} $ has normal crossings (hypothesis 2)), $ \Omega_{\mathbf{P}^N}^{1}(\log \mathcal{H}) $ is a \emph{Steiner} bundle over $ \mathbf{P}^N $, i.e.
$$ 0 \longrightarrow \mathcal{O}_{\mathbf{P}^N}(-1)^{\ell-N-1} \longrightarrow \mathcal{O}_{\mathbf{P}^N}^{\ell-1} \longrightarrow \Omega_{\mathbf{P}^N}^{1}(\log \mathcal{H}) \longrightarrow 0 $$
is exact. Since in the previous sequence all the terms are vector bundles, applying $ \mathcal{H}om(\cdot, \, \mathcal{O}_{\mathbf{P}^N}) $ we get
$$ 0 \longrightarrow {\Omega_{\mathbf{P}^N}^{1}(\log \mathcal{H})}^{\vee} \longrightarrow \mathcal{O}_{\mathbf{P}^N}^{\ell-1} \longrightarrow \mathcal{O}_{\mathbf{P}^N}(1)^{\ell-N-1} \longrightarrow 0, $$
which, via tensor product with $ \mathcal{I}_{V_{d},\,\mathbf{P}^N\,|_{H}} $, becomes
$$ 0 \longrightarrow \mathcal{I}_{V_{d} \cap H, \, H} \otimes \, {\Omega_{\mathbf{P}^N}^{1}(\log \mathcal{H})}^{\vee}_{|_{H}} \longrightarrow \mathcal{I}_{V_{d} \cap H, \, H} \otimes \, \mathcal{O}_{\mathbf{P}^N \, |_{H}}^{\ell-1} \longrightarrow $$
$$ \longrightarrow \mathcal{I}_{V_{d} \cap H, \, H} \otimes \, \mathcal{O}_{\mathbf{P}^N}(1)^{\ell-N-1}_{|_{H}} \longrightarrow 0. \quad\quad\quad\quad\quad\quad\quad\quad\,\,\, $$ 
Passing to cohomology we obtain
\begin{equation}\label{eq:coomolSteiner1}
\ldots \longrightarrow H^{0}(H, \mathcal{I}_{V_{d} \cap H, \, H} \otimes \, \mathcal{O}_{\mathbf{P}^N}(1)^{\ell-N-1}_{|_{H}}) \longrightarrow \quad\quad\quad\quad\quad\quad
\end{equation}
$$ \quad\quad\quad\,\,\, \longrightarrow H^{1}(H, \mathcal{I}_{V_{d} \cap H, \, H} \otimes \, {\Omega_{\mathbf{P}^N}^{1}(\log \mathcal{H})}^{\vee}_{|_{H}}) \longrightarrow H^{1}(H, \mathcal{I}_{V_{d} \cap H, \, H} \otimes \, \mathcal{O}_{\mathbf{P}^N \, |_{H}}^{\ell-1}). \quad \,$$
We remark that
$$ H^{i}(H, \mathcal{I}_{V_{d} \cap H, \, H} \otimes \, \mathcal{O}_{\mathbf{P}^N}(t)^{s}_{|_{H}}) = H^{i}(H, \mathcal{I}_{V_{d} \cap H, \, H}(t))^{\oplus s} $$
for all $ i, s, t $ integers such that $ i, s \geq 0 $. We note also that $ H^{0}(H, \mathcal{I}_{V_{d} \cap H, \, H}(1)) $ is the set of all homogeneous forms of degree $ 1 $ over $ H $ vanishing at $ V_{d} \cap H $ and so it is equal to $\{0\}$ . Thus (\ref{eq:coomolSteiner1}) reduces to 
$$ 0 \longrightarrow H^{1}(H, \mathcal{I}_{V_{d} \cap H, \, H} \otimes \, {\Omega_{\mathbf{P}^N}^{1}(\log \mathcal{H})}^{\vee}_{|_{H}}) \longrightarrow H^{1}(H, \mathcal{I}_{V_{d} \cap H, \, H})^{\oplus \ell-1}. $$
If we consider the induced cohomology sequence of (\ref{eq:idealsheafsequenceres}) we get that 
$$ H^{1}(H, \mathcal{I}_{V_{d} \cap H, \, H}) = {\mathbf{C}}^{k-1} $$
where $ k $ denotes the number of connected components of $ V_{d} \cap H $. Since $ V_{d} \cap H $ is connected, $ k = 1 $ and so (\ref{eq:keyfact2}) holds.
\end{proof}
Since isomorphic logarithmic bundles have the same set of unstable hypersurfaces, we have the following:

\begin{cor}
If $ \ell \geq N+4 $ then the map in (\ref{eq:Torellimap}) is generically injective.
\end{cor}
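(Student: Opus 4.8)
The plan is to obtain the statement from Theorem~\ref{t54}: I will show that, for every arrangement $\mathcal D$ lying in a dense open subset of the parameter space, the three hypotheses of that theorem are satisfied, so that $\mathcal D$ is recovered from $\Omega^1_{\mathbf P^n}(\log\mathcal D)$ as its set of unstable smooth irreducible hypersurfaces of degree $d$. Since $\ell\ge N+4$ is precisely hypothesis $1)$, only hypotheses $2)$ and $3)$ need to be shown Zariski-generic. By Remark~\ref{r:Veronese} — more precisely, because $\nu_d^{\ast}$ identifies $|\mathcal O_{\mathbf P^n}(d)|$ with the space $(\mathbf P^N)^{\vee}$ of hyperplanes of $\mathbf P^N$ — an arrangement $\mathcal D=\{D_1,\dots,D_\ell\}$ of degree-$d$ hypersurfaces is the same datum as the $\ell$-tuple $([H_1],\dots,[H_\ell])$ of the associated hyperplanes; hence the arrangements under consideration are parametrized by a dense open subset $T$ of the irreducible variety $((\mathbf P^N)^{\vee})^\ell$, which has dimension $N\ell$.

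Hypothesis $2)$ — that $H_1,\dots,H_\ell$ have normal crossings in $\mathbf P^N$ — asks that every $k\le N+1$ of the corresponding linear forms be linearly independent, an open and nonempty condition, so it holds on a dense open subset of $T$. The real point is hypothesis $3)$. Since the osculating hyperplanes of a rational normal curve $\mathcal C_N\subset\mathbf P^N$ form a rational normal curve $\mathcal C_N^{\vee}\subset(\mathbf P^N)^{\vee}$, the hyperplanes $H_1,\dots,H_\ell$ osculate some $\mathcal C_N$ if and only if $[H_1],\dots,[H_\ell]$ lie on a single rational normal curve of $(\mathbf P^N)^{\vee}$. The rational normal curves of a $\mathbf P^N$ form one $PGL_{N+1}$-orbit, of dimension $\dim PGL_{N+1}-\dim PGL_2=((N+1)^2-1)-3=N^2+2N-3$; therefore the incidence variety of pairs (rational normal curve of $(\mathbf P^N)^{\vee}$, ordered $\ell$-tuple of points on it) has dimension $(N^2+2N-3)+\ell$, and the locus $Z\subset((\mathbf P^N)^{\vee})^\ell$ of configurations violating $3)$, being its image, has dimension at most that. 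Now
$$(N^2+2N-3)+\ell=(N-1)(N+3)+\ell<(N-1)\ell+\ell=N\ell$$
precisely when $\ell>N+3$, so the assumption $\ell\ge N+4$ forces $Z$ to be a proper closed subvariety. Intersecting $T$ with the open locus where $2)$ holds and with the complement of $Z$ yields a dense open subset $U\subseteq T$ on which hypotheses $1)$, $2)$, $3)$ all hold.

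To conclude, fix $\mathcal D\in U$. By Theorem~\ref{t54}, $\mathcal D$ equals the set of smooth irreducible hypersurfaces of degree $d$ that are unstable for $\Omega^1_{\mathbf P^n}(\log\mathcal D)$. If $\mathcal D'$ is any arrangement with $\Omega^1_{\mathbf P^n}(\log\mathcal D')\cong\Omega^1_{\mathbf P^n}(\log\mathcal D)$, then the two bundles share the same set of unstable hypersurfaces; but every component of $\mathcal D'$ is unstable for $\Omega^1_{\mathbf P^n}(\log\mathcal D')$ by Lemma~\ref{l:inst}, so each lies in $\mathcal D$, and comparing cardinalities gives $\mathcal D'=\mathcal D$. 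Hence the map in (\ref{eq:Torellimap}) is injective on the dense open set $U$, i.e.\ generically injective. The step carrying the content is the dimension estimate showing that $Z$ has positive codimension exactly when $\ell\ge N+4$; checking that the locus where $2)$ holds is dense open, and that $T$ is nonempty and hence dense, is routine.
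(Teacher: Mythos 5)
Your proof is correct and takes essentially the same route as the paper: the corollary is deduced from Theorem~\ref{t54} together with Lemma~\ref{l:inst}, with the word ``generically'' absorbing hypotheses $2)$ and $3)$. The paper leaves the genericity of those hypotheses entirely implicit, whereas you supply the dimension count — an ordered $\ell$-tuple on a rational normal curve of $(\mathbf{P}^N)^{\vee}$ fills at most $(N^2+2N-3)+\ell$ dimensions, which is less than $N\ell$ precisely when $\ell\ge N+4$ — that justifies it.
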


\begin{rem}
We don't know if Theorem \ref{t54} holds also without hypothesis 3).   
\end{rem}

\begin{rem}
In the case of arrangements of smooth quadrics with normal crossings on $ \mathbf{P}^n $, hypothesis 1) of Theorem \ref{t54} becomes $ \ell \geq {{(n+1)(n+2)} \over {2}} + 3 $, which translates in $ \ell \geq 9 $ if $ n=2 $. In the next two sections we will describe the cases of $ \ell = 1 $ and $ \ell = 2 $.
\end{rem}

\section{One quadric}

Arrangements consisting of one smooth quadric are not of \emph{Torelli type}. In this sense we have the following:

\begin{prop}\label{p:1quadrica}
Let $ Q \subset \mathbf{P}^n $ be a smooth quadric and let $ \mathcal{D} = \{Q\} $. Then 
\begin{equation}\label{eq:isom1quad}
\Omega_{\mathbf{P}^n}^{1}(\log \mathcal{D}) \cong \mathbf{TP}^n(-2).
\end{equation} 
\end{prop}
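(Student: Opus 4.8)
The plan is to specialize Theorem~\ref{T:Ancona} to the case $ \ell=1 $, $ d_{1}=2 $, and then to match the resulting exact sequence with a twisted Euler sequence via a constant change of basis coming from the quadratic form defining $ Q $.

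Write $ f $ for the defining equation of $ Q $. Since $ \ell=1 $, the summand $ \mathcal{O}_{\mathbf{P}^n}^{\ell-1} $ in (\ref{eq:Anconas}) vanishes and the matrix $ N $ of (\ref{eq:matrixAncona}) reduces to the single row $ (\partial_{0}f, \ldots, \partial_{n}f) $ of linear forms; hence Theorem~\ref{T:Ancona} gives
$$ 0 \longrightarrow \Omega_{\mathbf{P}^n}^{1}(\log \mathcal{D})^{\vee} \longrightarrow \mathcal{O}_{\mathbf{P}^n}(1)^{n+1} \buildrel \rm N \over \longrightarrow \mathcal{O}_{\mathbf{P}^n}(2) \longrightarrow 0. $$
On the other hand, dualizing the Euler sequence and twisting by $ \mathcal{O}_{\mathbf{P}^n}(2) $ yields
$$ 0 \longrightarrow \Omega_{\mathbf{P}^n}^{1}(2) \longrightarrow \mathcal{O}_{\mathbf{P}^n}(1)^{n+1} \buildrel \rm e \over \longrightarrow \mathcal{O}_{\mathbf{P}^n}(2) \longrightarrow 0, $$
where $ e $ is contraction with the Euler field, i.e. the row $ (x_{0}, \ldots, x_{n}) $.

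Next I would use that a quadratic form can be written $ f = \tfrac{1}{2}\,{}^{t}x\,A\,x $ for a symmetric $ (n+1)\times(n+1) $ matrix $ A $ with constant entries, so that $ \partial_{i}f = ({}^{t}(Ax))_{i} $; smoothness of $ Q $ is exactly the condition that $ A $ be invertible. It follows that $ N = e \circ \mu_{A} $, where $ \mu_{A} $ is the automorphism of $ \mathcal{O}_{\mathbf{P}^n}(1)^{n+1} $ induced by the constant invertible matrix $ A $. Since $ \mu_{A} $ is an isomorphism, it carries $ \ker N $ onto $ \ker e $, whence $ \Omega_{\mathbf{P}^n}^{1}(\log \mathcal{D})^{\vee} \cong \Omega_{\mathbf{P}^n}^{1}(2) $; dualizing (and using that $ \Omega_{\mathbf{P}^n}^{1}(\log \mathcal{D}) $ is locally free) gives $ \Omega_{\mathbf{P}^n}^{1}(\log \mathcal{D}) \cong (\Omega_{\mathbf{P}^n}^{1}(2))^{\vee} = \mathbf{TP}^n(-2) $, as claimed. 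There is no serious obstacle: the only point requiring care is the equivalence between $ Q $ being smooth and $ A $ being non-degenerate (equivalently, the non-vanishing of the Hessian of $ f $), after which everything reduces to the observation that a constant invertible matrix induces an automorphism of $ \mathcal{O}_{\mathbf{P}^n}(1)^{n+1} $ that identifies the two short exact sequences above.
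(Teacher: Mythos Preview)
Your proof is correct and follows essentially the same approach as the paper: both specialize Theorem~\ref{T:Ancona} to $\ell=1$, $d_1=2$ and identify the resulting sequence with a twisted Euler sequence. The only cosmetic difference is that the paper normalizes coordinates so that $M={}^{t}(x_0,\ldots,x_n)$, whereas you keep the general quadratic form and use the invertible symmetric matrix $A$ to furnish the isomorphism; these are equivalent moves.
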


\begin{proof}
Let  us consider the short exact sequence for $ \Omega_{\mathbf{P}^n}^{1}(\log \mathcal{D}) $:
\begin{equation}\label{eq:Anconaper1}
0 \longrightarrow \mathcal{O}_{\mathbf{P}^n}(-2) \buildrel \rm M \over \longrightarrow \mathcal{O}_{\mathbf{P}^n}(-1)^{n+1} \longrightarrow \Omega_{\mathbf{P}^n}^{1}(\log \mathcal{D})  \longrightarrow 0.
\end{equation} 
where $ M $ is the matrix associated to the injective map defined by the three partial derivatives of a quadratic polynomial defining $ Q $. Without loss of generality we can assume that 
$$ M = \begin{pmatrix} x_{0} \cr \vdots \cr x_{n} \cr \end{pmatrix} $$
and so, by tensor product with $ \mathcal{O}_{\mathbf{P}^n}(1) $, (\ref{eq:Anconaper1}) becomes the Euler sequence for $ \mathbf{TP}^n(-1) $, which concludes the proof.                                                                               
\end{proof}

\begin{rem}
The previous result confirms Theorem \ref{T:UY2} for $ d=2 $ and yields a description of the logarithmic bundle in this case. Indeed, a quadric is defined by an equation which is always of \emph{Sebastiani-Thom type}. Moreover, a direct consequence of Proposition \ref{p:1quadrica} and Proposition \ref{p:n+2iperpiani} is that, if $ \mathcal{H} $ is an arrangement made of $ n+2 $ hyperplanes with normal crossings on $ \mathbf{P}^n $ and $ \mathcal{D} $ is as above, then $ \Omega_{\mathbf{P}^n}^{1}(\log \mathcal{D}) \cong \Omega_{\mathbf{P}^n}^{1}(\log \mathcal{H})(-1) $.
\end{rem}

\section{Pairs of quadrics}

Let's start with a characterization of pairs of quadrics with normal crossings. For the detailed proof see \cite{O2} (Theorem $ 8.2 $) or \cite{A} (Theorem 7.3).

\begin{thm0}\label{T:coppiequadriche}
Let $ Q_{1} $ and $ Q_{2} $ be smooth quadrics in $ \mathbf{P}^n $. \\
The following facts are equivalent:
\begin{itemize}
\item[$1)$] $ \mathcal{D} = \{ Q_{1},Q_{2}\} $ is an arrangement with normal crossings in $ \mathbf{P}^n $, that is $ Q_{1} \cap Q_{2} $ is a smooth codimension two subvariety;
\item[$2)$] in the pencil of quadrics generated by $ Q_{1} $ and $ Q_{2} $ there are $ n+1 $ distinct singular quadrics $($cones$)$ with singular points $ \{v_{0}, \ldots , v_{n}\} $.
\end{itemize}
\end{thm0}

\begin{rem}
Let $ C_{1} $ and $ C_{2} $ be smooth conics in $ \mathbf{P}^2 $. From the B\'ezout's Theorem it follows that the condition of normal crossings is equivalent to the fact that the pencil of conics generated by $ C_{1} $ and $ C_{2} $ has four distinct base points, which we denote by $ \{P,Q,R,S\} $. In \cite{A} we give a proof of Theorem \ref{T:coppiequadriche} in the case of $ n = 2 $ by using the above stated equivalence. In particular, the three singular conics in the pencil of $ C_{1} $ and $ C_{2} $ are three pairs of lines, with singular points denoted by $ \{E,F,G\} $ $($see figure $ 1 $$)$.
\end{rem}

\begin{figure}[h]
    \centering
\includegraphics[width=60mm]{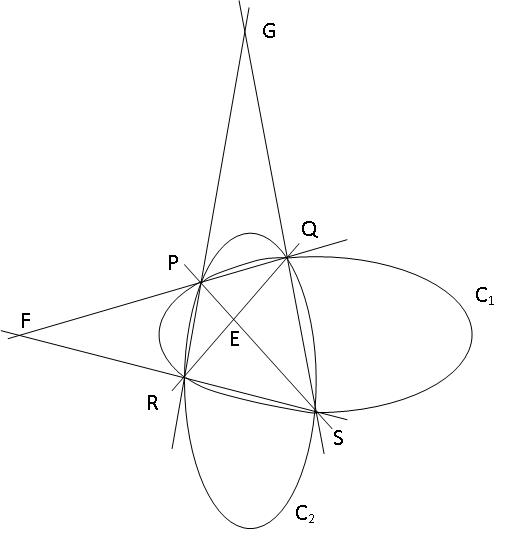}
    \caption{Two conics with normal crossings}
\label{flattening}
    \end{figure}

Now let's come back to the \emph{Torelli problem}. \\
Let $ \Omega_{\mathbf{P}^n}^{1}(\log \mathcal{D}) $ be the logarithmic bundle attached to an arrangement of smooth quadrics $ \mathcal{D} = \{Q_{1}, Q_{2}\} $ with normal crossings in $ \mathbf{P}^n $. Theorem \ref{T:Ancona} asserts that $ \Omega_{\mathbf{P}^n}^{1}(\log \mathcal{D}) $ is a rank $ n $ vector bundle over $ \mathbf{P}^n $ with the following short exact sequence:
\begin{equation}\label{eq:Ancona2quadriche} 
0 \longrightarrow \mathcal{O}_{\mathbf{P}^n}(-2)^{2} \longrightarrow \mathcal{O}_{\mathbf{P}^n}(-1)^{n+1} \oplus \mathcal{O}_{\mathbf{P}^n} \longrightarrow \Omega_{\mathbf{P}^n}^{1}(\log \mathcal{D})  \longrightarrow 0. 
\end{equation}
So, Bohnhorst-Spindler criterion (\cite{BS}) implies that $ \Omega_{\mathbf{P}^n}^{1}(\log \mathcal{D}) $ is a stable bundle. At this point, in the case of $ n=2 $ we can say that $ \mathcal{D} $ is not an arrangement of \emph{Torelli type}: indeed, the normalized bundle of $ \Omega_{\mathbf{P}^2}^{1}(\log \mathcal{D}) $ belongs to the moduli space $ \mathbf{M}_{\mathbf{P}^2}(-1,3) $ of stable rank-$2$ vector bundles on $ \mathbf{P}^2 $ with Chern classes $ -1 $ and $ 3 $, which, as we can see in \cite{OSS}, satisfies
\begin{equation}\label{eq:dimmodspace}
dim \mathbf{M}_{\mathbf{P}^2}(-1,3) = 8,
\end{equation}
while the number of parameters identifying a pair of conics is $10$. \\
Coming back to the general case, by using (\ref{eq:Ancona2quadriche}) we get that 
$$ H^{0}(\mathbf{P}^n,\Omega_{\mathbf{P}^n}^{1}(\log \mathcal{D})) = \mathbf{C} $$ 
and the $n$-th Chern class of $ \Omega_{\mathbf{P}^n}^{1}(\log \mathcal{D}) $ is equal to $ n+1 $. In the following we prove that $ \Omega_{\mathbf{P}^n}^{1}(\log \mathcal{D}) $ has one non-zero section with $ n+1 $ zeroes:  

\begin{prop}\label{p:zeroes}
Let $ \mathcal{D} = \{Q_{1},Q_{2}\} $ be an arrangement of smooth quadrics in $ \mathbf{P}^{n} $ with normal crossings and let $ \{v_{0}, \ldots, v_{n}\} $ as in Theorem \ref{T:coppiequadriche}. Then $ \{v_{0}, \ldots, v_{n}\} $ is the zero locus of the non-zero section of $ \Omega_{\mathbf{P}^n}^{1}(\log \mathcal{D}) $.
\end{prop}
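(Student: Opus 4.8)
The plan is to use the Ancona resolution~\eqref{eq:Ancona2quadriche} to make the unique section explicit and then to identify its zero locus with the singular points of the pencil. First I would normalise the situation: after a linear change of coordinates in $\mathbf{P}^n$ I can simultaneously diagonalise the pencil generated by $Q_1$ and $Q_2$, writing $f_1=\sum_{i=0}^n x_i^2$ and $f_2=\sum_{i=0}^n \lambda_i x_i^2$ with the $\lambda_i$ pairwise distinct (this is possible precisely because of the normal crossings hypothesis, via Theorem~\ref{T:coppiequadriche}); in these coordinates the singular quadric in the pencil corresponding to the parameter $\lambda_i$ is a cone with vertex $v_i=[0,\dots,0,1,0,\dots,0]$ (the $i$-th coordinate point). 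So the claim becomes: the zero locus of the section is exactly $\{[e_0],\dots,[e_n]\}$.

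Next I would write down the section. From~\eqref{eq:Ancona2quadriche} the global sections of $\Omega_{\mathbf{P}^n}^1(\log\mathcal D)$ are computed from $H^0(\mathcal O_{\mathbf{P}^n}(-1)^{n+1}\oplus\mathcal O_{\mathbf{P}^n})=\mathbf{C}$, coming from the $\mathcal O_{\mathbf{P}^n}$ summand; dualising, or rather using the dual sequence~\eqref{eq:Anconas}, a section of $\Omega_{\mathbf{P}^n}^1(\log\mathcal D)$ corresponds to the map $\mathcal O_{\mathbf{P}^n}\to\Omega_{\mathbf{P}^n}^1(\log\mathcal D)$ induced by the inclusion of the trivial summand, and its zero locus is the degeneracy locus where the composite $\mathcal O_{\mathbf{P}^n}\to\mathcal O_{\mathbf{P}^n}(-1)^{n+1}\oplus\mathcal O_{\mathbf{P}^n}$ fails to be a subbundle inclusion modulo the image of $\mathcal O_{\mathbf{P}^n}(-2)^2$. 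Concretely, I would dualise~\eqref{eq:Ancona2quadriche} and read off that the section is cut out by the condition that the vector $(0,\dots,0,1)\in\mathbf{C}^{n+1}\oplus\mathbf{C}$ (a local generator of the image of the section at a point $x$) lies in the column span of the Ancona matrix
\[
N=\begin{pmatrix}\partial_0 f_1&\cdots&\partial_n f_1&f_1\\ \partial_0 f_2&\cdots&\partial_n f_2&0\end{pmatrix}^{t}\!\!,
\]
that is, that $(0,\dots,0,1)$ is a combination of the two rows $(\partial_0 f_1,\dots,\partial_n f_1,f_1)$ and $(\partial_0 f_2,\dots,\partial_n f_2,0)$. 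With $f_1=\sum x_i^2$, $f_2=\sum\lambda_i x_i^2$ the partials are $\partial_j f_1=2x_j$, $\partial_j f_2=2\lambda_j x_j$, so the vanishing of the section at $x$ means: there exist scalars $a,b$ with $2a x_j+2b\lambda_j x_j=0$ for all $j\in\{0,\dots,n\}$ and $a f_1(x)=1$ (the last equation just normalises $a\ne 0$, so really the content is $a x_j+b\lambda_j x_j=0$, i.e. $(a+b\lambda_j)x_j=0$ for all $j$, with $(a,b)\ne(0,0)$).

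Finally I would solve this system. The equations $(a+b\lambda_j)x_j=0$ for $j=0,\dots,n$ with $(a,b)\ne 0$ force $x_j=0$ for every $j$ except those for which $a+b\lambda_j=0$; since the $\lambda_j$ are pairwise distinct, for a fixed $(a,b)$ at most one index $j$ satisfies $a+b\lambda_j=0$, hence $x$ is supported on a single coordinate, i.e. $x=[e_j]$ for some $j$, and each $[e_j]$ indeed works (take $(a,b)$ proportional to $(-\lambda_j,1)$). This gives exactly the $n+1$ points $v_0,\dots,v_n$, and a local computation (the $2\times 2$ minors of the relevant $2\times(n+1)$ matrix of linear forms vanish to first order at each $[e_j]$) shows the scheme structure is reduced, consistent with $c_n(\Omega_{\mathbf{P}^n}^1(\log\mathcal D))=n+1$ noted just before the statement. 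The only genuinely delicate point is getting the dualisation bookkeeping right — correctly identifying which map from~\eqref{eq:Ancona2quadriche} represents the section and which degeneracy locus is its zero scheme — but once the section is pinned down to the explicit linear-algebra condition above, the rest is the short diagonalisation argument; I expect the matching with the vertices $v_i$ to fall out immediately from the simultaneous diagonalisation.
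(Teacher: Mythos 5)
Your argument is correct and follows essentially the same route as the paper's own proof: both identify the unique section with the image of the trivial summand in the resolution (\ref{eq:Ancona2quadriche}) and characterize its zero locus as the locus where the two gradient columns of the Ancona matrix become proportional, i.e.\ the solutions of $Ax=\lambda Bx$, which are exactly the vertices $v_{0},\ldots,v_{n}$ of the singular quadrics in the pencil. The only difference is cosmetic: you simultaneously diagonalize the pair first (legitimate by Theorem \ref{T:coppiequadriche}) and solve the resulting system explicitly, whereas the paper keeps general symmetric matrices and reads the answer off directly as the generalized eigenvector problem.
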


\begin{proof}
Assume that $ A=\{a_{ij}\} $ and $ B=\{b_{ij}\} $ are the matrices representing $ Q_{1} $ and $ Q_{2} $ with respect to the canonical basis of $ \mathbf{C}^{n+1} $ and let
$$ M = \begin{pmatrix} 
2 $$ \sum^n_{i=0}$$ a_{0i}x_{i} & 2 $$\sum^n_{i=0}$$  b_{0i}x_{i} \cr 
\vdots  &  \vdots  \cr 
2 $$\sum^n_{i=0}$$ a_{ni}x_{i}  &  2 $$\sum^n_{i=0}$$ b_{ni}x_{i}  \cr 
$$\sum^n_{i,j=0} $$ a_{ij}x_{i}x_{j} & 0 \cr
\end{pmatrix} $$
be the $ (n+2) \times 2 $ matrix associated to the exact sequence (\ref{eq:Ancona2quadriche}). \\
In order to determine the zeroes of the section of $ \Omega_{\mathbf{P}^n}^{1}(\log \mathcal{D}) $, we have to find $ x = (x_{0}, \ldots ,x_{n}) \in \mathbf{C}^{n+1} -\{\underline{0}\} $ such that the linear part of $ M $ has rank $ 1 $, that is the solutions of 
$$ Ax = \lambda Bx $$
for certain $ \lambda \in \mathbf{C} $ ($ \lambda $ is an eigenvalue of $ AB^{-1} $ and $ x $ is the corresponding eigenvector). In other words, any such $ x $ has to be a representative vector for the singular point of the quadric associated to $ A - \lambda B $, which concludes the proof.
\end{proof}

\begin{rem}
If $ n=2 $, in \cite{A} we prove also that the three lines through any two of the points in $ \{E,F,G\} $ $($see figure $3$$)$ are exactly the \emph{jumping lines} of the normalized bundle of $ \Omega_{\mathbf{P}^2}^{1}(\log \mathcal{D}) $.
\end{rem}

In order to state and prove the main result concerning pairs of quadrics in the complex projective space, we recall some preliminaries. If $ Q \subset \mathbf{P}^{n} $ is a smooth quadric, then $ Q^{\vee}\subset (\mathbf{P}^{n})^{\vee} $ is the \emph{dual quadric} of $ Q $, which is given by the tangent hyperplanes to $ Q $. In particular, if $ Q $ is represented by a symmetric $ n \times n $ matrix $ G $, then $ Q^{\vee} $ is associated to $ G^{-1} $. The set of tangent hyperplanes to two smooth quadrics with normal crossings in $ \mathbf{P}^{n} $, $ Q_{1} $ and $ Q_{2} $, is the base locus of the pencil of quadrics in $ (\mathbf{P}^{n})^{\vee} $ generated by $ Q_{1}^{\vee} $ and $ Q_{2}^{\vee} $, that is $ Q_{1}^{\vee} \cap Q_{2}^{\vee} $. \\
We have the following:

\begin{thm0}\label{T:pairquad}
Let $ \mathcal{D}_{1} = \{Q_{1},Q_{2}\} $ and $ \mathcal{D}_{2} = \{Q'_{1},Q'_{2}\} $ be arrangements of smooth quadrics with normal crossings in $ \mathbf{P}^{n} $. Then 
$$ \Omega_{\mathbf{P}^n}^{1}(\log \mathcal{D}_{1}) \cong \Omega_{\mathbf{P}^n}^{1}(\log \mathcal{D}_{2}) \Longleftrightarrow Q_{1}^{\vee} \cap Q_{2}^{\vee} = {Q_{1}'}^{\vee} \cap {Q'_{2}}^{\vee}. $$
\end{thm0}

\begin{proof}
Suppose that $\Omega_{\mathbf{P}^n}^{1}(\log \mathcal{D}_{1}) \cong \Omega_{\mathbf{P}^n}^{1}(\log \mathcal{D}_{2})$, Proposition \ref{p:zeroes} and Theorem \ref{T:coppiequadriche} imply that in a frame of $ \mathbf{C}^{n+1} $ given by representative vectors of the points $ \{v_{0}, \ldots, v_{n}\} $, the quadrics $ Q_{1} $, $ Q_{2} $, $ Q'_{1} $, $ Q'_{2} $ have equations, respectively:
$$ a_{0}x_{0}^2+a_{1}x_{1}^2+ \ldots + a_{n-1}x_{n-1}^2-x_{n}^2 = 0 $$
$$ b_{0}x_{0}^2+b_{1}x_{1}^2+ \ldots + b_{n-1}x_{n-1}^2-x_{n}^2 = 0 $$
$$ c_{0}x_{0}^2+c_{1}x_{1}^2+ \ldots + c_{n-1}x_{n-1}^2-x_{n}^2 = 0 $$
$$ d_{0}x_{0}^2+d_{1}x_{1}^2+ \ldots + d_{n-1}x_{n-1}^2-x_{n}^2 = 0 $$
where $ a_{i},b_{i}, c_{i}, d_{i} \in \mathbf{C} - \{0\} $, $ a_{i} \not= b_{i} $, $ c_{i} \not= d_{i} $, $ \displaystyle{{a_{i}} \over {a_{j}}} \not= {{b_{i}} \over {b_{j}}} $, $ \displaystyle{{c_{i}} \over {c_{j}}} \not= {{d_{i}} \over {d_{j}}} $, for all $ i,j \in \{0, \ldots, n-1 \} $ (we remark that our quadrics are smooth and in the pencil generated by them there are $ n+1 $ singular quadrics). Saying that the two logarithmic bundles are isomorphic is equivalent to the fact that we can find two invertible matrices 
\begin{equation}\label{eq:M'quadrics}
M' = \begin{pmatrix} 
\alpha & \beta \cr 
\gamma & \delta \cr
\end{pmatrix}
\end{equation}
\begin{equation}\label{eq:M''quadrics}
M'' = \begin{pmatrix}
E_{1,1} & \ldots & E_{1,n+1} & f_{1} \cr
E_{2,1} & \ldots & E_{2,n+1} & f_{2} \cr
\vdots & {} & \vdots & \vdots \cr
E_{n+1,1} & \ldots & E_{n+1,n+1} & f_{n+1} \cr
0 & \ldots & 0 & \theta \cr
\end{pmatrix}
\end{equation}
with $ \alpha, \beta, \gamma, \delta, E_{i,j}, \theta \in \mathbf{C} $ and $ f_{j} = \displaystyle \sum_{j = 0}^{n} f^{i}_{j}x_{i} $ complex linear forms, such that the diagram
$$ \mathcal{O}_{\mathbf{P}^n}(1)^{n+1} \oplus \mathcal{O}_{\mathbf{P}^n} \buildrel \rm N_{1} \over \longrightarrow \mathcal{O}_{\mathbf{P}^n}(2)^{2} $$ 
$$ \,\,\,\,\,\,\,\,\,\,M'' \downarrow  \,\,\,\,\,\,\,\,\,\,\,\,\,\,\,\,\,\,\,\,\,\,\,\,\,\,\,\,\,\ \downarrow M'  $$
$$ \mathcal{O}_{\mathbf{P}^n}(1)^{n+1} \oplus \mathcal{O}_{\mathbf{P}^n} \buildrel \rm N_{2} \over \longrightarrow \mathcal{O}_{\mathbf{P}^n}(2)^{2} $$ 
commutes. In this diagram $ N_{1} $ and $ N_{2} $ are the matrices associated to the two logarithmic bundles in the sense of Theorem \ref{T:Ancona}, that is
$$ N_{1} = \begin{pmatrix}
2a_{0}x_{0} & \dots  & 2a_{n-1}x_{n-1} & -2x_{n} &  a_{0}x_{0}^2+ \ldots + a_{n-1}x_{n-1}^2-x_{n}^2 \cr 
2b_{0}x_{0} & \ldots & 2b_{n-1}x_{n-1} & -2x_{n} & 0 \cr
\end{pmatrix} $$
$$ N_{2} = \begin{pmatrix}
2c_{0}x_{0} & \dots  & 2c_{n-1}x_{n-1} & -2x_{n} &  c_{0}x_{0}^2+ \ldots + c_{n-1}x_{n-1}^2-x_{n}^2 \cr 
2d_{0}x_{0} & \ldots & 2d_{n-1}x_{n-1} & -2x_{n} & 0 \cr
\end{pmatrix}. $$
Let's equate the entries of the $ 2 \times (n+2) $ matrices $ M'N_{1} $ and $ N_{2}M'' $, we get the following conditions:  
$$  E_{i,j} = 0 \,\,\,\,\,\,\,\, for\,\,all \,\,i,j \in \{1, \ldots ,n\}, \, i\not=j $$
\begin{equation}\label{eq:E_ii}
E_{i,i} = \displaystyle{{a_{i-1}} \over {c_{i-1}}} \alpha + {{b_{i-1}} \over {c_{i-1}}} \beta = \displaystyle{{a_{i-1}} \over {d_{i-1}}} \gamma + \displaystyle{{b_{i-1}} \over {d_{i-1}}} \delta \,\,\,\,\,\, for\,\, i \in \{1, \ldots ,n\}
\end{equation}
\begin{equation}\label{eq:E_n+1n+1}
E_{n+1,n+1} = \alpha + \beta = \gamma + \delta
\end{equation}
\begin{equation}\label{eq:1}
\alpha a_{i-1} = 2 c_{i-1} f^{i-1}_{i} + \theta c_{i-1}  \,\,\,\,\,\,\,\, for\,\,i \in \{1, \ldots ,n\}
\end{equation}
\begin{equation}\label{eq:2}
\alpha = 2f^{n}_{n+1} + \theta
\end{equation}
\begin{equation}\label{eq:3}
\gamma a_{i-1} = 2 d_{i-1} f^{i-1}_{i}   \,\,\,\,\,\, for\,\, i \in \{1, \ldots ,n\}
\end{equation}
\begin{equation}\label{eq:3bis}
\gamma = 2f^{n}_{n+1}
\end{equation}
\begin{equation}\label{eq:4}
c_{i-1}f^{i+j}_{i}+c_{i+j}f^{i-1}_{i+j+1} = 0    \,\,\,\,\,\, for \,\,i \in \{1, \ldots ,n-1\}, \, j \in \{i, \ldots, n-1\}
\end{equation}
\begin{equation}\label{eq:5}
c_{i-1}f^{n}_{i}-f^{i-1}_{n+1} = 0    \,\,\,\,\,\, for \,\,i \in \{1, \ldots ,n\}
\end{equation}
\begin{equation}\label{eq:6}
d_{i-1}f^{i+j}_{i}+d_{i+j}f^{i-1}_{i+j+1} = 0    \,\,\,\,\,\, for \,\,i \in \{1, \ldots ,n-1\}, \, j \in \{i, \ldots, n-1\}
\end{equation}
\begin{equation}\label{eq:7}
d_{i-1}f^{n}_{i}-f^{i-1}_{n+1} = 0    \,\,\,\,\,\, for \,\,i \in \{1, \ldots ,n\}.
\end{equation}
By using equations (\ref{eq:4}), (\ref{eq:5}), (\ref{eq:6}), (\ref{eq:7}) and remembering the properties of $ c_{0}, \ldots, c_{n-1}, d_{0}, \ldots, d_{n-1} $, we get that if $ j-i \not= 1 $ then $ f^{i}_{j}={0} $. So each linear form reduces to $ f_{j} = f^{j-1}_{j}x_{j -1} $. In order to determine these coefficients we consider equations (\ref{eq:1}), (\ref{eq:2}), (\ref{eq:3}), (\ref{eq:3bis}) (actually these are $ 2n+2 $ relations) and we get
\begin{equation}\label{eq:fi-1iintermedia1}
f^{i-1}_{i} =  \displaystyle {{\alpha} \over {2}} \left ({{a_{i-1}} \over {c_{i-1}}} - {{a_{i}} \over {c_{i}}} \right ) + f^{1}_{2}  \,\,\,\,\,\, for \,\,i \in \{1, \ldots ,n\}
\end{equation}
\begin{equation}\label{eq:fnn+1intermedia1}
f^{n}_{n+1} =  \displaystyle {{\alpha} \over {2}} \left (1 - {{a_{1}} \over {c_{1}}} \right ) + f^{1}_{2} 
\end{equation}
\begin{equation}\label{eq:thetaintermedia}
\theta =  \displaystyle {{a_{1}} \over {c_{1}}} \alpha - 2 f^{1}_{2}.
\end{equation}
\begin{equation}\label{eq:fi-1iintermedia2}
f^{i-1}_{i} =  \displaystyle {{a_{i-1}} \over {2d_{i-1}}} \gamma   \,\,\,\,\,\, for \,\,i \in \{1, \ldots ,n\}
\end{equation}
\begin{equation}\label{eq:fnn+1intermedia2}
f^{n}_{n+1} =  \displaystyle {{\gamma} \over {2}}. 
\end{equation}
If we consider (\ref{eq:fnn+1intermedia1}), (\ref{eq:fnn+1intermedia2}), (\ref{eq:fi-1iintermedia2}) for $  i=2 $, together with (\ref{eq:E_n+1n+1}) and (\ref{eq:E_ii}) for $ i=1 $, we get 
\begin{equation}\label{eq:beta}
\beta = \displaystyle {{a_{1}b_{0}c_{0}(c_{1}-d_{1})+a_{0}c_{1}d_{1}(d_{0}-c_{0})+a_{0}a_{1}(c_{0}d_{1}-c_{1}d_{0})} \over {b_{0}c_{1}(c_{0}-d_{0})(d_{1}-a_{1})}} \alpha
\end{equation}
\begin{equation}\label{eq:gamma}
\gamma = \displaystyle {{d_{1}(a_{1}-c_{1})} \over {c_{1}(a_{1}-d_{1})}} \alpha
\end{equation}
\begin{equation}\label{eq:delta}
\delta = \displaystyle {{a_{1}d_{1}(b_{0}d_{0}-a_{0}c_{0})+a_{1}c_{1}d_{0}(a_{0}-b_{0})-a_{0}c_{1}d_{1}(d_{0}-c_{0})} \over {b_{0}c_{1}(c_{0}-d_{0})(a_{1}-d_{1})}} \alpha.
\end{equation}
So, if we choose $ \alpha \in \mathbf{C} - \{0\} $ and the inequality
\begin{equation}\label{eq:M'quadricsinvertibile}
a_{1}(c_{1}d_{0}-c_{0}d_{1})+c_{1}d_{1}(c_{0}-d_{0}) \not= 0
\end{equation}
is satisfied, then the matrix $ M' $ introduced in (\ref{eq:M'quadrics}) is invertible. Moreover,  from (\ref{eq:gamma}), (\ref{eq:thetaintermedia}), (\ref{eq:fi-1iintermedia2}), (\ref{eq:fnn+1intermedia2}) we get that 
\begin{equation}\label{eq:fi-1i}
f^{i-1}_{i} = \displaystyle {{a_{i-1}d_{1}(a_{1}-c_{1})} \over {2d_{i-1}c_{1}(a_{1}-d_{1})}} \alpha \,\,\, for \,\, i \in \{1, \ldots, n\}
\end{equation}
$$ f^{n}_{n+1} = \displaystyle {{d_{1}(a_{1}-c_{1})} \over {2c_{1}(a_{1}-d_{1})}} \alpha $$
$$ \theta = \displaystyle {{a_{1}(c_{1}-d_{1})} \over {c_{1}(a_{1}-d_{1})}} \alpha. $$
If we consider (\ref{eq:fi-1iintermedia1}) and (\ref{eq:fi-1i}) we obtain $ n-1 $ resolubility conditions for our system involving the coefficients of the quadrics: for $ i \in \{1, \ldots ,n\} $
\begin{equation}\label{eq:first(n-1)rescond}
a_{1}a_{i-1}(c_{1}d_{i-1}-c_{i-1}d_{1})+a_{1}c_{i-1}d_{i-1}(d_{1}-c_{1})+a_{i-1}c_{1}d_{1}(c_{i-1}-d_{i-1}) = 0.  \end{equation}
Moreover, by using (\ref{eq:E_ii}) for $ i \in \{2, \ldots, n\} $ with (\ref{eq:beta}), (\ref{eq:gamma}), (\ref{eq:delta}) 
we get the following $ n-1 $ relations:
\begin{equation}\label{eq:conditionn}
a_{1}b_{1}(b_{0}-a_{0})(c_{0}d_{1}-c_{1}d_{0})+c_{1}d_{1}(c_{0}-d_{0})(a_{0}b_{1}-a_{1}b_{0}) = 0
\end{equation}
\begin{equation}\label{eq:condition2n-2}
a_{1}a_{i-1}b_{0}(c_{0}-d_{0})(c_{i-1}d_{1}-c_{1}d_{i-1})+a_{1}b_{0}b_{i-1}(d_{1}-c_{1})(c_{i-1}d_{0}-c_{0}d_{i-1})+
\end{equation}
$$ +a_{0}a_{1}b_{i-1}(c_{i-1}-d_{i-1})(c_{1}d_{0}-c_{0}d_{1})+c_{1}d_{1}(c_{i-1}-d_{i-1})(c_{0}-d_{0})(a_{0}b_{i-1}-a_{i-1}b_{0}) = 0  $$
for $ i \in \{1, \ldots ,n\} $. 
In particular, we can find final expressions for $ E_{i,i} $:
$$ E_{1,1} = \displaystyle {{a_{1}(c_{1}-d_{1})(a_{0}-b_{0})} \over {c_{1}(a_{1}-d_{1})(c_{0}-d_{0})}} \alpha $$
\leftline{$ E_{i,i} = $} 
$$ = \displaystyle {{c_{1}(a_{i}b_{0}-a_{0}b_{i-1})[d_{1}(d_{0}-c_{0})-a_{1}d_{0}]+a_{1}c_{0}[b_{0}c_{1}(a_{i-1}-b_{i-1})+b_{i-1}d_{1}(b_{0}-a_{0})]} \over {b_{0}c_{1}c_{i -1}(c_{0}-d_{0})(a_{1}-d_{1})}} \alpha $$
for all $ i \in \{2,3,\ldots,n \} $ and
$$ E_{n+1,n+1}= \displaystyle {{(a_{0}-b_{0})[a_{1}(c_{1}d_{0}-c_{0}d_{1})+c_{1}d_{1}(c_{0}-d_{0})]} \over {b_{0}c_{1}(c_{0}-d_{0})(a_{1}-d_{1})}} \alpha. $$
Thus the matrix $ M'' $ introduced in (\ref{eq:M''quadrics}) is invertible if and only if, fixed $ \alpha \in \mathbf{C} - \{0\} $, besides (\ref{eq:M'quadricsinvertibile}), for all $ i \in \{2,3,\ldots,n\} $ holds
\begin{equation}\label{eq:M''quadricsinvertibile}
c_{1}(a_{i-1}b_{0}-a_{0}b_{i-1})[d_{1}(d_{0}-c_{0})-a_{1}d_{0}]+a_{1}c_{0}[b_{0}c_{1}(a_{i-1}-b_{i-1})+b_{i-1}d_{1}(b_{0}-a_{0})] \not= 0.
\end{equation}
In this way we have that $ \Omega_{\mathbf{P}^n}^{1}(\log \mathcal{D}_{1}) \cong \Omega_{\mathbf{P}^n}^{1}(\log \mathcal{D}_{2}) $ if and only if the $ 2n-2 $ relations (\ref{eq:first(n-1)rescond}), (\ref{eq:conditionn}), (\ref{eq:condition2n-2}) and the $ n $ open conditions (\ref{eq:M'quadricsinvertibile}), (\ref{eq:M''quadricsinvertibile}) hold. If we fix the coefficients $ a_{0}, \ldots, a_{n-1}, b_{0}, \ldots, b_{n-1}, c_{0}, d_{0} $, the $ 2n-2 $ resolubility conditions imply that the matrix associated to $ Q'_{i} $, $ i \in \{1,2\} $, is of the form
\begin{equation}\label{eq:matriceQ'i}
\begin{pmatrix} 
\displaystyle{{a_{0}b_{0}(1+t_{i})} \over {b_{0}+t_{i}a_{0}}} & 0 & \ldots & \ldots & 0 \cr
0 & \displaystyle{{a_{1}b_{1}(1+t_{i})} \over {b_{1}+t_{i}a_{1}}} & 0 & \ldots & 0 \cr
\vdots & {} & \ddots & {} & \vdots \cr
0 & \ldots & 0 & \displaystyle{{a_{n-1}b_{n-1}(1+t_{i})} \over {b_{n-1}+t_{i}a_{n-1}}} & 0 \cr
0 & \ldots & \ldots & 0 & -1 \cr
\end{pmatrix}
\end{equation}
where $ t_{1} = \displaystyle {{b_{0}(a_{0}-c_{0})} \over {a_{0}(c_{0}-b_{0})}} $ and $ t_{2} = \displaystyle {{b_{0}(a_{0}-d_{0})} \over {a_{0}(d_{0}-b_{0})}} $. \\ Up to scalar multiplication, the previous matrix is equivalent to 
$$ (A^{-1}+t_{i}B^{-1})^{-1} $$
where $ A $ and $ B $ are the diagonal matrices associated to $ Q_{1} $ and $ Q_{2} $.  Since $ A^{-1} $ and $ B^{-1} $ represent the dual quadrics $ Q^{\vee}_{1} $ and $ Q^{\vee}_{2} $ in the dual projective space $ (\mathbf{P}^{n})^{\vee} $, then $ \mathcal{D}_{1} $ and $ \mathcal{D}_{2} $ have the same tangent hyperplanes, that is $ Q_{1}^{\vee} \cap Q_{2}^{\vee} = {Q_{1}'}^{\vee} \cap {Q'_{2}}^{\vee} $. We remark that this implication is true when the entries of the matrix in (\ref{eq:matriceQ'i}) satisfy the open condition (\ref{eq:M''quadricsinvertibile}) ((\ref{eq:M'quadricsinvertibile}) is always verified). \\
Viceversa, assume that $ \mathcal{D}_{1} $ and $ \mathcal{D}_{2} $ have the same tangent hyperplanes, we want to prove that they have isomorphic logarithmic bundles. Since $ Q_{1} $ and $ Q_{2} $ have normal crossings, Theorem \ref{T:coppiequadriche} allows us to suppose that they are represented by $ A = diag(a_{0}, \ldots, a_{n-1},-1) $ and $ B = diag(b_{0}, \ldots, b_{n-1},-1) $, as above. By hypothesis, $ {Q'_{1}}^{\vee} $ and $ {Q'_{2}}^{\vee} $ live in the pencil of quadrics generated by $ {Q_{1}}^{\vee} $ and $ {Q_{2}}^{\vee} $, that is $ Q'_{1} $ and $ Q'_{2} $ are represented by matrices like the one in (\ref{eq:matriceQ'i}). Clearly these matrices satisfy (\ref{eq:first(n-1)rescond}), (\ref{eq:conditionn}), (\ref{eq:M'quadricsinvertibile}). If also (\ref{eq:M''quadricsinvertibile}) holds, then $ \Omega_{\mathbf{P}^2}^{1}(\log \mathcal{D}_{1}) \cong \Omega_{\mathbf{P}^2}^{1}(\log \mathcal{D}_{2}) $, which concludes the proof.
\end{proof}

\begin{rem}
If $ \mathcal{D} $ is a pair of smooth conics with normal crossings, then Theorem \ref{T:pairquad} asserts that the isomorphism class of $ \Omega_{\mathbf{P}^2}^{1}(\log \mathcal{D}) $ is determined by the four tangent lines to $ \mathcal{D} $ (see figure $ 4 $). It is confirmed also by the dimensional computations that we recalled in (\ref{eq:dimmodspace}): indeed, the dimension of the moduli space $ \mathbf{M}_{\mathbf{P}^2}(-1,3) $ containing the normalized bundle of $ \Omega_{\mathbf{P}^2}^{1}(\log \mathcal{D}) $ equates the number of parameters determining four lines in $ \mathbf{P}^2 $. In particular, every element of $ \mathbf{M}_{\mathbf{P}^2}(-1,3) $ is a logarithmic bundle of a pair of smooth conics with normal crossings, twisted by $ -1 $.
\end{rem}

\begin{figure}[h]
    \centering
\includegraphics[width=50mm]{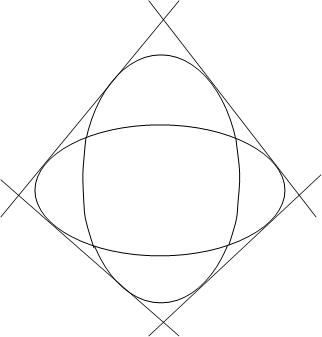}
    \caption{Four tangent lines of a pair of conics}
\label{flattening}
    \end{figure}

\vfill\eject


\begin{thebibliography}{}

\bibitem{AppuntiAncona} 
V. Ancona, \emph{notes of a talk given in Florence} (1998)
\bibitem{AO}
V. Ancona, G. Ottaviani, \emph{Unstable hyperplanes for Steiner bundles and multidimensional matrices}, Advances in Geometry, 1 (2001), 165-192
\bibitem{A}
E. Angelini, \emph{The Torelli problem for Logarithmic bundles of hypersurface arrangements in the projective space}, Ph.D. thesis (2013)
\bibitem{Arn}
V. I. Arnol'd, \emph{The cohomology ring of the colored braid group}, Math. Notes 5 (1969), 138-140
\bibitem{Ba}
W. Barth, \emph{Moduli of vector bundles on the projective plane}, Invent. Math. 42 (1977), 63-91
\bibitem{BS}
G. Bohnhorst, H. Spindler, \emph{The stability of certain vector bundles on $ \mathbf{P}^n $}, Complex algebraic varieties, Lect. Notes Math., vol. 1507, 39-50 (1992)
\bibitem{CHKS}
F. Catanese, S. Hosten, A. Khetan, B. Sturmfels, \emph{The maximum likelihood degree}, Amer. J. Math. 128 (2006), n. 3, 671-697
\bibitem{De0}
P. Deligne, \emph{\'Equations diff\'erentielles \`a points singuliers r\'eguliers}, Lecture Notes in Mathematics, Vol. 163, Springer-Verlag, Berlin (1970)
\bibitem{De}
P. Deligne, \emph{Th\'eorie de Hodge: II}, Publications math\'ematiques de l'I.H.\'E.S., tome 40 (1971), 5-57
\bibitem{Do}
I. Dolgachev, \emph{Logarithmic sheaves attached to arrangements of hyperplanes}, J. Math. Kyoto Univ. 47 (2007), n. 1, 35-64
\bibitem{Do-Ka}
I. Dolgachev, M. M. Kapranov, \emph{Arrangements of hyperplanes and vector bundles on $ \mathbf{P}^n $}, Duke Math. J. 71 (1993), n. 3, 633-664
\bibitem{FMV}
D. Faenzi, D. Matei, J. Vall\`es, \emph{Hyperplane arrangements of Torelli type}, Compositio Math., Volume 149, Issue 02, (2013), 309-332
\bibitem{H}
J. Harris, \emph{Algebraic Geometry: a first course}, Graduate Texts in Mathematics, Springer-Verlag New York (1992)
\bibitem{HKS}
S. Hosten, A. Khetan, B. Sturmfels, \emph{Solving the Likelihood Equations}, Foundations of Computational Mathematics, 5 (2005), 389-407
\bibitem{Hu}
K. Hulek, \emph{Stable rank-$ 2 $ vector bundles on $ \mathbf{P}_{2} $ with $ c_{1} $ odd}, Math. Ann. 242 (1979), 241-266
\bibitem{OSS}
C. Okonek, M. Schneider, H. Spindler, \emph{Vector Bundles on Complex Projective Spaces}, Birkhauser (1980)
\bibitem{OT}
P. Orlik, H. Terao, \emph{Arrangements of hyperplanes}, Grundlehren der Mathematischen Wissenschaften [Fundamental Principles of Mathematical Sciences], vol. 300, Springer-Verlag Berlin (1992)
\bibitem{O2}
G. Ottaviani, \emph{Introduction to the hyperdeterminant and to the rank of multidimensional matrices}, arXiv e-prints 1301.0472v1 [math.AG], to appear in ``Commutative algebra", Expository Papers Dedicated to David Eisenbud on the occasion of his 65$^{th}$ Birthday, Peeva Irena (Ed.) (2013), VIII
\bibitem{Sa}
K. Saito, \emph{Theory of logarithmic differential forms and logarithmic vector fields}, J. Fac. Sci. Univ. Tokyo Sect. IA Math. 27 (1980), n. 2, 265-291
\bibitem{Schenck}
H. K. Schenck, \emph{Elementary modifications and line configurations in $ \mathbf{P}^2 $}, Comment. Math. Helv. 78 (2003), n. 3, 447-462
\bibitem{Sch2}
R. L. E. Schwarzenberger, \emph{The secant bundle of a projective variety}, Proc. London Math. Soc. 14 (1964), 369-384.
\bibitem{Sch1}
R. L. E. Schwarzenberger, \emph{Vector bundles on the projective plane}, London Math. Soc. 11 (1961), 623-640.
\bibitem{UY1}
K. Ueda, M. Yoshinaga, \emph{Logarithmic vector fields along smooth plane cubic curves}, Kumamoto Journal of Mathematics, vol. 21, 11-20 (2008)
\bibitem{UY2}
K. Ueda, M. Yoshinaga, \emph{Logarithmic vector fields along smooth divisors in projective spaces}, Hokkaido Math. Journal, vol. 38, no. 3, 409-415 (2009)
\bibitem{V}
J. Vall\`es, \emph{Nombre maximal d'hyperplanes instables pour un fibr\'e de Steiner}, Math. Zeit., 233, 507-514 (2000)



\end{thebibliography}
\end{document}